\def\cgnk{\Pi_{n,k}}
\def\Cal{\mathcal}
\def\H{{\Cal H}}
\def\F{{\Cal F}}
\def\bbr{{\Bbb R}}
\def\bbn{{\Bbb N}}
\def\bbh{{\Bbb H}}
\def\bbc{{\Bbb C}}
\def\bbz{{\Bbb Z}}
\def\bbe{{\Bbb E}}
\def\const{{\hbox{\rm const}}}
\def\cosh{{\hbox{\rm cosh}}}
\def\sinh{{\hbox{\rm sinh}}}
\def\Pr{{\hbox{\rm Pr}}}
\def\rn{\bbr^n}
\def\part{\partial}
\def\intl{\int\limits}
\def\b{\beta}
\def\Gam{\Gamma}
\def\a{\alpha}
\def\om{\omega}
\def\del{\delta}
\def\vp{\varphi}
\def\gam{\gamma}
\def\sig{\sigma}
\def\z{\zeta}
\def\e{\varepsilon}
\def\t{\tau}
\def\th{\theta}
\newtheorem{theorem}{Theorem}[section]
\newtheorem{lemma}[theorem]{Lemma}
\theoremstyle{definition}
\newtheorem{definition}[theorem]{Definition}
\theoremstyle{remark}
\newtheorem{remark}[theorem]{Remark}
\theoremstyle{corollary}
\newtheorem{corollary}[theorem]{Corollary}
\newtheorem{proposition}[theorem]{Proposition}
\numberwithin{equation}{section}
\newcommand{\be}{\begin{equation}}
\newcommand{\ee}{\end{equation}}
\newcommand{\bea}{\begin{eqnarray}}
\newcommand{\eea}{\end{eqnarray}}
\newcommand{\Bea}{\begin{eqnarray*}}
\newcommand{\Eea}{\end{eqnarray*}}
\def\sideremark#1{\ifvmode\leavevmode\fi\vadjust{\vbox to0pt{\vss
 \hbox to 0pt{\hskip\hsize\hskip1em
\vbox{\hsize2cm\tiny\raggedright\pretolerance10000
 \noindent #1\hfill}\hss}\vbox to8pt{\vfil}\vss}}}%
\begin{document}

\title[ Overdetermined Transforms in Integral Geometry]
{ Overdetermined Transforms in Integral Geometry}

\author{B. Rubin}
\address{
Department of Mathematics, Louisiana State University, Baton Rouge,
LA, 70803 USA}

\email{borisr@math.lsu.edu}


\subjclass[2000]{Primary 44A12; Secondary 47G10}


\dedicatory{Dedicated to Professor David Shoikhet
on the occasion of his 60th birthday}

\keywords{Radon  transforms, $k$-plane transforms, inversion formulas, range characterization,  $L^p$ spaces.}

\begin{abstract}
 A simple example  of an $n$-dimensional admissible complex of planes is given for the  overdetermined $k$-plane  transform in $\bbr^n$.  For the corresponding restricted $k$-plane  transform
   sharp existence conditions are obtained   and explicit inversion formulas are discussed  in the general context of $L^p$ functions. Similar questions are studied for overdetermined Radon type transforms on the sphere and the hyperbolic space. A theorem describing  the range of the restricted $k$-plane  transform on the space of rapidly decreasing smooth functions is proved.

\end{abstract}

\maketitle

\section{Introduction}

The $k$-plane Radon-John transform of a function $f$ on $\rn$  is a mapping
\be\label{i9435rf7}  R_k :\; f(x)  \rightarrow \vp(\t)=\intl_{\t} f(x)\, d_\t x,\ee
where  $\t$ is a $k$-dimensional plane in $\rn$, $1\le k\le n-1$, and  $ d_{\t} x$ is  the Euclidean volume element on $\t$; see, e.g., \cite{GGG03, GGG80, He11, Jo38}.
We denote by  $\cgnk$   the
 manifold of all non-oriented $k$-dimensional planes  in
$\rn$. Since  $\dim \Pi_{n, k}=(k+1)(n-k)$ is greater than $n$ if $k<n-1$, then the  inversion problem for $R_k$ is overdetermined if we use information about $(R_kf)(\t)$ for {\it all} $\t\in \Pi_{n, k}$. The celebrated Gel'fand's  question  is   how to reduce this overdeterminedness  or, more precisely, how to define an $n$-dimensional subset  $\tilde \Pi_{n,k}$  of $ \Pi_{n,k}$ so that  $f(x)$  could be recovered, knowing   $\vp(\t)$ only for $\t\in \tilde \Pi_{n,k}$; see, e.g., \cite {Gelf60}. We call the subsets $\tilde \Pi_{n,k}$  admissible complexes of $k$-planes.

The background of the theory related to this question was developed  in   Gel'fand's school; see, e.g.,   \cite {GGi77, GGG80, GGr61, GGr68, GGr91,   GGR84,  GGrS66, GGrS67,   Gon89, Ki61, Mai}. The construction of the admissible complexes in these works is usually given in topological terms,  and the relevant inversion formulas rely on the fundamental concepts of the kappa-operator (for $k$ even) and the Crofton operator (for $k$ odd, when the inversion formulas are nonlocal). An alternate approach to nonlocal inversion formulas, which employs  the Fourier integral operators, was suggested by  Greenleaf and  Uhlmann \cite{GU}.  In all aforementioned works the Radon type transforms are studied mainly on  smooth rapidly decreasing functions.

 Our interest to the  problem  is motivated by the following.

1. Is it possible to construct an easily visualizable admissible complex $\tilde \Pi_{n,k}$ using relatively simple tools and derive the relevant explicit inversion formulas for $R_kf$? A progress in this direction  might be helpful in  convex geometry, where geometrically transparent  analytic constructions are crucial; see, e.g., \cite{G06, Ru09}.

2. It is known \cite{So79, Str81, Ru04b} that for $f\in L^p (\rn)$, $(R_k f)(\t)$ is finite for almost all $\t\in \Pi_{n,k}$ provided that $1\le p<n/k$, and this condition is sharp. However,  $\tilde \Pi_{n,k}$  has  measure zero in $\Pi_{n,k}$. Thus,  what can one say about the existence of the restricted transform
\be (\tilde R_k f)(\t)=(R_k f)(\t)|_{\t\in \tilde\Pi_{n,k}}\ee on functions $f\in L^p (\rn)$?

3. How to describe the range  $\tilde R_k (X)$ where $X=S(\rn)$ is  the Schwartz space of rapidly decreasing smooth functions or any other reasonable function space?

Similar questions can be  posed in other contexts of integral geometry, for instance,   in the elliptic or   hyperbolic space.

All these questions are  in the spirit of \cite {Gelf60} and related  publications, however, the  settings  are not identical, e.g., in the part related to the $L^p$ theory. Our tools are also different.
 In Section 2 we
define the admissible complex $\tilde \Pi_{n,k}$ as a set of all $k$-dimensional planes in $\rn$ which are parallel to a fixed  $(k+1)$-dimensional subspace, for instance, a $(k+1)$-dimensional coordinate plane. We establish sharp conditions for the existence of the corresponding restricted $k$-plane  transform $\tilde R_k$ on $L^p$ functions.  The  explicit inversion formulas for $\tilde R_k$ are simple consequences of the known  inversion formulas for the case of hyperplanes of codimension $1$. The case  of smooth functions is also included. Similar questions are studied in Sections 3 and 4 for geodesic Radon transforms on the sphere and the hyperbolic space. Here we use the same idea  adapted for the corresponding group of motions.

 Section 5  conceptually pertains to Section 2. We have put it at the end of the paper in order not to overload the reader with technicalities. Here  the main result is Theorem \ref{657390sw}
 which  describes  the range of the restricted $k$-plane  transform on the space $S(\rn)$ of rapidly decreasing smooth functions. The classical Helgason theorem for $k=n-1$ \cite [p. 5]{He11} stating that the Radon transform is a bijective map from  $S(\rn)$  onto the corresponding space $S_H (S^{n-1} \times \bbr)$, arises as a particular case of our result. The proof Theorem \ref{657390sw} follows the same scheme as in \cite{He11}, but it is more detailed and, probably, simplified (here we employ some ideas from  Carton-Lebrun \cite{CL}). Moreover,
  our proof gives not only the bijectivity,  but also the continuity of $\tilde R_k$ and its inverse in the respective topologies.

  It is worth mentioning that in the case
  of the {\it overdetermined} $k$-plane transform on  $S(\rn)$,
  different range characterizations   can be found in \cite{Go91, Go10, Gr85, Ku91, Pe91,  Pe92, Pe93, Ri86}.

 It might be of interest to describe the ranges of the restricted transforms from Sections 3 and 4  by making use of the relevant tools of harmonic analysis. This topic can be addressed in  future publications.

\vskip 0.2 truecm

{\bf Some notation.} The following notation will be used throughout the paper. We fix an orthonormal basis $e_1, \ldots, e_n$   in $\bbr^{n+1}$; $S^n$ is the $n$-dimensional unit sphere in $\bbr^{n+1}$. If $\th\in S^n$ is the variable of integration, then $d\th$ stands for the 
$O(n+1)$-invariant measure
 on  $S^n$ and $\sig_n=\int_{S^n}\,d\th=2\pi^{n+1}/\Gam((n+1)/2)$ is the surface area of $S^n$. We write
$d_*\th=\sig_n^{-1} d\th$ for the corresponding normalized measure.

\section{The $k$-plane transform on $\bbr^n$}\label {222222}

\subsection{Definitions}
 In this section we denote
\be\label {hfos4609} \bbr^{k+1}=\bbr e_1 \oplus \cdots \oplus  \bbr e_{k+1}, \qquad \bbr^{n-k-1}=\bbr e_{k+2}\oplus \cdots \oplus  \bbr e_{n}.\ee
 Let  $\tilde \Pi_{n,k}$ be the manifold of all $k$-planes in $\rn$ which are parallel to  $\bbr^{k+1}$. We write $x\in \rn$ as $x=(x', x'')$ where $x'\in \bbr^{k+1}$, $x''\in \bbr^{n-k-1}$.
 Every plane  $\t\in \tilde \Pi_{n,k}$ is parametrized by the triple
 $(\th, s; x'')\in  S^k \times \bbr \times \bbr^{n-k-1}$,
 where $S^k$ is the unit sphere in $\bbr^{k+1}$. Specifically,
\[ \t\equiv \t (\th, s; x'')=\t_0 +x'', \quad \t_0=\{x'\in \bbr^{k+1}: \th \cdot x'=s\}.\]
We  denote $\tilde Z_{n,k}= S^k \times \bbr \times \bbr^{n-k-1}$ and equip this set   with the measure $\tilde d \t= d_*\th ds dx''$.
 Clearly, $\dim \tilde \Pi_{n,k}=n$ and
\be \t (\th, s; x'')=\t (-\th, -s; x'')\quad \forall \;(\th, s; x'')\in \tilde Z_{n,k}.\ee

The $k$-plane transform (\ref{i9435rf7}) restricted to $\tilde\Pi_{n,k}$  has the form
 \be \label{kkmm4539a1}(\tilde R_k f)(\th, s; x'')=\intl_{\th^\perp \cap \bbr^{k+1}} f(s\th +u, x'')\, d_\th u, \ee
where $d_\th u$ is the  volume element of $\th^\perp \cap \bbr^{k+1}$. We shall also write
 \be \label{kkmm4539a1bu}(\tilde R_k f)(\th, s; x'')=(R f_{x''})(\th, s), \quad f_{x''}(\cdot)=f(\cdot, x''),\ee
where $R$ is the usual hyperplane Radon transform in $\bbr^{k+1}$ of a function $f_{x''}(\cdot)$;
 cf. \cite{He11}. Thus,  $\tilde R_k f$ is actually a ``partial'' Radon transform of $f$ in the $x'$-variable, so that  many properties of $R$ can be transferred to  $\tilde R_k$. Below we review some of them.

 \subsection{Existence on $L^p$-functions}


 \begin{theorem} \label{kmmop3z} The integral $\tilde R_k f$ is finite a.e. on $\tilde Z_{n,k}$ if $f$ is locally integrable on $\rn \setminus \{x: x'=0\}$ and
\be  \label{kknn4539a2}\intl_{|x''|<a}dx''\intl_{|x'|>1} \frac{|f(x',x'')|}{|x'|}\, dx'<\infty\quad  \mbox{ for any $\;a>0$ }.\ee
\end{theorem}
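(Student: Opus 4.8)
The plan is to exploit the factorization (\ref{kkmm4539a1bu}), which presents $\tilde R_k f$ as the ordinary hyperplane Radon transform $R$ on $\bbr^{k+1}$ acting in the $x'$-variable of the slice $f_{x''}(\cdot)=f(\cdot,x'')$, with $x''$ a parameter. Replacing $f$ by $|f|$ and invoking Tonelli, it suffices to (i) prove the claim for one admissible slice $g=f_{x''}$ on $\bbr^{k+1}$, and (ii) check that a.e.\ slice is admissible. For (ii) I would note that local integrability of $f$ on $\rn\setminus\{x'=0\}$, applied by Fubini over the compact shells $\{\del\le|x'|\le R\}\times\{|x''|\le a\}$ (which lie in $\rn\setminus\{x'=0\}$) and a countable exhaustion, gives local integrability of $f_{x''}$ on $\bbr^{k+1}\setminus\{0\}$ for a.e.\ $x''$; and applying Fubini to (\ref{kknn4539a2}) with $a=1,2,\dots$ yields $\intl_{|y|>1}|f_{x''}(y)|\,|y|^{-1}\,dy<\infty$ for a.e.\ $x''$.

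The analytic core is the single-slice statement: if $g$ is locally integrable on $\bbr^{k+1}\setminus\{0\}$ and $\intl_{|y|>1}|g(y)|\,|y|^{-1}\,dy<\infty$, then $Rg$ converges absolutely a.e. I would prove this by a weighted $L^1$ estimate rather than pointwise. Fix $T\ge1$ and $\del\in(0,T)$; since the hyperplane $\{y:y\cdot\th=s\}$ lies in $\{|y|\ge|s|\}$, the range $|s|>\del$ only involves $g$ on $\{|y|>\del\}$, and interchanging the order of integration gives
\[\intl_{S^k}d\th\intl_{\del<|s|<T}(R|g|)(\th,s)\,ds=\intl_{\bbr^{k+1}}|g(y)|\,\mu_{\del,T}(y)\,dy,\qquad \mu_{\del,T}(y)=\intl_{S^k}\mathbf 1_{\{\del<|y\cdot\th|<T\}}\,d\th.\]
The crux is the band-measure estimate for $\mu_{\del,T}$. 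Writing $y=|y|\om$ and using the slice formula $\intl_{S^k}F(\om\cdot\th)\,d\th=\sig_{k-1}\intl_{-1}^{1}F(t)(1-t^2)^{(k-2)/2}\,dt$, one has $\mu_{\del,T}(y)=0$ for $|y|\le\del$, $\mu_{\del,T}(y)\le\sig_k$ for $\del<|y|\le T$, and, most importantly,
\[\mu_{\del,T}(y)\le\sig_{k-1}\intl_{-T/|y|}^{T/|y|}(1-t^2)^{(k-2)/2}\,dt\le C_k\,\frac{T}{|y|}\qquad(|y|>T).\]
It is exactly this $|y|^{-1}$ decay that is matched to the weight in (\ref{kknn4539a2}).

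Combining the three regimes then yields
\[\intl_{S^k}d\th\intl_{\del<|s|<T}(R|g|)(\th,s)\,ds\le\sig_k\intl_{\del<|y|\le T}|g(y)|\,dy+C_k\,T\intl_{|y|>T}\frac{|g(y)|}{|y|}\,dy,\]
and both terms are finite: the first by local integrability of $g$ on the compact shell $\{\del\le|y|\le T\}\subset\bbr^{k+1}\setminus\{0\}$, the second (for $T\ge1$) by the hypothesis on $g$. Hence for a.e.\ $\th$ the inner integral is finite, so $(R|g|)(\th,s)<\infty$ for a.e.\ $s$ with $\del<|s|<T$. Running $\del=\del_j\downarrow0$ and $T=T_j\uparrow\infty$ through countable sequences and intersecting the corresponding conull sets of $\th$, I conclude $(R|g|)(\th,s)<\infty$ for a.e.\ $(\th,s)$, the only excluded $s$-locus $\{s=0\}$ being null.

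Finally, (i) and (ii) combine by Tonelli on $\tilde Z_{n,k}$: the admissible $x''$ form a conull set, on each such slice $\tilde R_k|f|$ is finite for a.e.\ $(\th,s)$, so the exceptional set in $\tilde Z_{n,k}$ is null; absolute convergence of $\tilde R_k|f|$ a.e.\ gives the asserted a.e.\ finiteness of $\tilde R_k f$. I expect the main obstacle to be the band estimate $\mu_{\del,T}(y)\le C_kT/|y|$ together with correct bookkeeping of the singularity at $x'=0$: one must keep the lower cutoff $\del$ to stay off the locus where $g$ need not be integrable, and verify that the hyperplanes contributing for $|s|>\del$ indeed avoid $\{|y|\le\del\}$. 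Once the $|y|^{-1}$ decay is secured, the rest is routine Fubini. (Alternatively, the single-slice statement can be quoted from \cite{So79,Str81,Ru04b}, but the weighted $L^1$ formulation above is precisely what hypothesis (\ref{kknn4539a2}) is tailored to.)
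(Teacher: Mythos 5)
Your argument is correct, but it is worth noting that the paper does not actually prove this theorem in-house: it disposes of it in one line by citing Theorem 3.2 of \cite{Ru13a}, relying on the factorization $(\tilde R_kf)(\th,s;x'')=(Rf_{x''})(\th,s)$ to reduce everything to the known existence theorem for the codimension-one Radon transform on $\bbr^{k+1}$. What you have written is, in effect, a self-contained reconstruction of that cited result together with the slicing reduction: the Tonelli argument producing, for a.e.\ $x''$, a slice $g=f_{x''}$ that is locally integrable on $\bbr^{k+1}\setminus\{0\}$ with $\intl_{|y|>1}|g(y)|\,|y|^{-1}dy<\infty$; and then the weighted duality identity $\intl_{S^k}\intl_{\del<|s|<T}(R|g|)\,ds\,d\th=\intl|g|\,\mu_{\del,T}$ with the band-measure bound $\mu_{\del,T}(y)\le C_kT/|y|$ for $|y|>T$. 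All steps check out: the three regimes for $\mu_{\del,T}$ are right (including the vanishing for $|y|\le\del$, which is what lets the lower cutoff $\del$ protect the possible nonintegrable singularity of $g$ at the origin), the exponent $|y|^{-1}$ is exactly matched to hypothesis (\ref{kknn4539a2}), and the exhaustion over countably many $\del_j\downarrow0$, $T_j\uparrow\infty$ loses only the null set $\{s=0\}$. One small point you pass over: for $k=1$ the density $(1-t^2)^{(k-2)/2}=(1-t^2)^{-1/2}$ is not bounded by $1$ near $t=\pm1$, so the estimate $\intl_{-\e}^{\e}(1-t^2)^{-1/2}dt=2\arcsin\e\le\pi\e$ is needed to keep the $C_kT/|y|$ bound uniform in $k\ge1$; this is harmless but should be said. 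The trade-off between the two routes is clear: the paper's citation is economical and inherits sharpness from the reference, while your version makes the mechanism visible --- in particular it exposes why the weight is $|x'|^{-1}$ rather than the $|x|^{k-n}$-type weight of the unrestricted $k$-plane transform, which is precisely the phenomenon behind the ``Open problem'' following Corollary \ref{kmmop3}.
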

This statement follows immediately  from \cite[Theorem  3.2]{Ru13a}.

 \begin{corollary} \label{kmmop3} If $f\in L^p (\rn)$,   $1\!\le\! p\!<\!(k\!+\!1)/k$, then $(\tilde R_k f)(\t)$ is finite for almost all planes   $\t\in \tilde \Pi_{n,k}$. If $p\ge(k\!+\!1)/k$, then there is a function $f_0\in L^p (\rn)$ for which $(\tilde R_k f_0)(\t)\equiv \infty$ on $\tilde \Pi_{n,k}$.
\end{corollary}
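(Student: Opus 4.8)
The plan is to obtain both halves from Theorem \ref{kmmop3z} and from the identity (\ref{kkmm4539a1bu}), which exhibits $\tilde R_k$ as a partial hyperplane Radon transform $R$ in the $x'$-variable on $\bbr^{k+1}$; since the sharp existence exponent for the hyperplane transform on $\bbr^{k+1}$ is $(k+1)/k$, this is exactly the threshold that should appear.

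\textbf{Sufficiency ($1\le p<(k+1)/k$).} First I would verify the hypotheses of Theorem \ref{kmmop3z}. Local integrability of $f$ on $\rn\setminus\{x'=0\}$ is automatic, since $f\in L^p(\rn)\subset L^1_{loc}(\rn)$ for $p\ge1$. The main point is the weighted bound (\ref{kknn4539a2}). For $p>1$ I would factor $|f|/|x'|=|f|\cdot|x'|^{-1}$ and apply Hölder's inequality with exponents $p$ and $p'=p/(p-1)$ over the slab $\{|x''|<a,\ |x'|>1\}$, bounding the left-hand side of (\ref{kknn4539a2}) by $\|f\|_p\,\big(\intl_{|x''|<a}dx''\intl_{|x'|>1}|x'|^{-p'}dx'\big)^{1/p'}$. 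The $x''$-factor is finite, and passing to polar coordinates in $\bbr^{k+1}$ reduces the $x'$-factor to $\intl_1^\infty r^{k-p'}\,dr$, which converges exactly when $p'>k+1$; the elementary equivalence $p'>k+1\Longleftrightarrow p<(k+1)/k$ is precisely the hypothesis. The endpoint $p=1$ is immediate, since $|x'|^{-1}\le1$ on $\{|x'|>1\}$ makes the left side of (\ref{kknn4539a2}) at most $\|f\|_1$.

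\textbf{Sharpness ($p\ge(k+1)/k$).} Here I would build a counterexample of product form $f_0(x',x'')=g_0(x')\,w(x'')$, with $w$ a strictly positive $L^p(\bbr^{n-k-1})$ function (say the Gaussian $w(x'')=e^{-|x''|^2}$) and $g_0$ the radial function $g_0(x')=G(|x'|)$, $G(\rho)=(1+\rho)^{-k}(\log(2+\rho))^{-1}$, on $\bbr^{k+1}$. By (\ref{kkmm4539a1bu}), $(\tilde R_k f_0)(\th,s;x'')=w(x'')\,(Rg_0)(\th,s)$, so it suffices to show $g_0\in L^p(\bbr^{k+1})$ and $(Rg_0)(\th,s)=+\infty$ for all $(\th,s)$; then $f_0\in L^p(\rn)$ by Fubini, and $\tilde R_k f_0\equiv\infty$ because $w>0$. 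Membership is checked at infinity, where $\|g_0\|_p^p$ is comparable to $\intl^\infty \rho^{k-kp}(\log\rho)^{-p}\,d\rho$: this converges for every $p\ge(k+1)/k$, below threshold because the power is $<-1$, and at the critical value $p=(k+1)/k$ because the power equals $-1$ while $(\log\rho)^{-p}$ with $p>1$ secures convergence; near the origin $G$ is bounded. For the divergence I would write a point of $\{\th\cdot x'=s\}$ as $x'=s\th+u$ with $u\perp\th$, so $|x'|=\sqrt{s^2+|u|^2}$ and $(Rg_0)(\th,s)=\sigma_{k-1}\intl_0^\infty G(\sqrt{s^2+r^2})\,r^{k-1}\,dr$; for large $r$ the integrand is comparable to $r^{-1}(\log r)^{-1}$, and since $\intl^\infty (r\log r)^{-1}\,dr=\infty$ the transform is infinite for every $\th\in S^k$ and every $s\in\bbr$.

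\textbf{Expected obstacle.} The only delicate step is the critical exponent $p=(k+1)/k$: the bare power $|x'|^{-k}$ lies in $L^p$ at infinity only for $p>(k+1)/k$ and fails at the endpoint, so the logarithm must be inserted and tuned so that $g_0$ stays in $L^p$ while the radial integral $\intl (r\log r)^{-1}\,dr$ still diverges. One must also note that this divergent tail as $r\to\infty$ is insensitive to $s$ and $\th$, which is what yields the divergence \emph{uniformly} over all of $\tilde\Pi_{n,k}$ rather than merely almost everywhere.
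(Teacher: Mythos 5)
Your proposal is correct and follows essentially the same route as the paper: the sufficiency half is exactly the paper's appeal to Theorem \ref{kmmop3z} plus H\"older (with the computation $p'>k+1\Leftrightarrow p<(k+1)/k$ spelled out), and the sharpness half is the same product-type counterexample $g(|x'|)\,e^{-|x''|^2}$ with a power--logarithm radial profile that lies in $L^p(\bbr^{k+1})$ while its hyperplane Radon transform diverges identically. The only (harmless) difference is that your single profile $(1+|x'|)^{-k}\log^{-1}(2+|x'|)$ serves all $p\ge (k+1)/k$ at once, whereas the paper tunes the exponents $(2+|x'|)^{-(k+1)/p}\log^{-1/p-\delta}(2+|x'|)$ to each $p$.
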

 \begin{proof}
The first statement follows from (\ref{kknn4539a2}) by H\"older's inequality. For the second statement we can take, e.g.,
\be  f_0 (x)\!=\!\frac{(2+|x'|)^{-(k+1)/p}\,  e^{-|x''|^2}}{\log^{1/p+\del} (2+|x'|)}, \quad 0\!<\!\del\! <\!1/p', \quad 1/p \!+\!1/p'\!=\!1.\ee
\end{proof}
\noindent {\bf Open problem.} The  condition $p<(k\!+\!1)/k$ differs from $p<n/k$ for the nonrestricted $k$-plane transform. It would be interesting to investigate  restrictions on $p$ for other known admissible complexes; see references  in Introduction.

\subsection{Inversion formulas}

Let $1\le p<(k+1)/k$.
Suppose that
\be  \label{kknn4539a3}\intl_{|x''|<a}\!\!\!dx''\!\!\intl_{\bbr^{k+1}}\!\! |f(x',x'')|^p\, dx'\!<\!\infty \quad  \mbox{ for all $\;a>0$ }.\ee
Then  $\vp=\tilde R_k f$  is well-defined by Theorem \ref{kmmop3z} and the function $f_{x''} (x')\equiv f(x',x'')$ belongs to $L^p (\bbr^{k+1})$ for almost all $x''\in \bbr^{n-k-1}$. Furthermore,  the function $ \vp_{x''} (\th, s)\equiv \vp (\th, s; x'')$ has the form
\be \label{durtkkaz} \vp_{x''} (\th, s)= (Rf_{x''})(\th, s)\ee
where $R$ is the usual hyperplane Radon transform in $\bbr^{k+1}$. Hence, inverting $R$ by any known method
(see, e.g., \cite{Ru04b, Ru13a}), we reconstruct
  $f_{x''} (x')\equiv f(x)$. For example, if $k=1$ and  $f\in L^p (\rn)$,  $1\le p\le 2$, then, by the formula (5.12) from \cite{Ru04b} we have
      \be\label{artemza11kk} f(x)=\frac{1}{\pi}
 \intl_0^\infty \frac{(R^*\vp_{x''}) (x') - (R^*_t \vp_{x''}) (x')}{t^2} \,dt. \ee
 Here $x'\in \bbr^2$, $x''\in \bbr^{n-2}$,
 \[
(R^*_t\vp_{x''}) (x')= \intl_{S^k} \vp_{x''}(\theta, x'\cdot \theta+t)\,d_*\theta, \quad (R^*\vp_{x''})(x')=(R^*_t\vp_{x''})(x')\big |_{t=0}.\]
The integral   $\int_0^\infty (\cdot )$ is understood as  $\lim\limits_{\e\to 0}\int_\e^\infty (\cdot )$ for almost all $x=(x',x'')$ in $\rn$ or in the $L^p (\bbr^2)$-norm for almost all $x'' \in \bbr^{n-2}$.

\section{The Elliptic Case}

\subsection{Definitions}
The $n$-dimensional elliptic space can be interpreted as the  $n$-dimensional unit sphere $S^n$ with identified antipodal points. According to this interpretation, we assume all functions on $S^n$ to be even. 
The Funk transform  $\F $ integrates  a function $f$ on  $S^n$  over $(n\!-\!1)$-dimensional totally geodesic submanifolds of $S^n$ (great circles, if $n=2$). For any $1\le k \le n-1$, the corresponding transform  $\F_k $ is similarly defined by integration  over $k$-dimensional totally geodesic submanifolds  of $S^n$. It can be realized as an integral
\be\label {hfos4609bu0}
(\F_k f)(\xi)=\intl_{S^n \cap \xi} f(\th)\, d_\xi \th, \qquad \xi \in G_{n+1, k+1},
\ee
where $G_{n+1, k+1}$ is the Grassmann manifold of $(k+1)$-dimensional linear subspaces of $\bbr^{n+1}$ and  $d_\xi \th$ denotes the $O(n+1)$-invariant probability measure on $S^n \cap \xi$. 

Suppose $k<n-1$. Then
 $\dim G_{n+1, k+1}= (k+1)(n-k)>n$ and the inversion problem for $\F_k f$ is overdetermined. Our aim is to define an $n$-dimensional admissible complex $\tilde G_n$ in $G_{n+1, k+1}$, so that $f$ could be explicitly reconstructed from $(\F_k f)(\xi)$, $\xi \in \tilde G_n$.
 We will be using the same idea as in the Euclidean case, but translations will be  replaced by orthogonal transformations. Let
\be\label {ttt609bu0cc} \bbr^{n-k}=\bbr e_1 \oplus \cdots \oplus  \bbr e_{n-k}, \qquad \bbr^{k+1}=\bbr e_{n-k+1}\oplus \cdots \oplus  \bbr e_{n+1},\ee
 \[  S^{n-k-1}=S^n \cap \bbr^{n-k}.\]
Fix a point $ v\in S^{n-k-1}$ and denote
\be\label {hfos4609bu2xy1}   \bbr^{k+2}_v=\bbr v \oplus \bbr^{k+1}, \qquad S^{k+1}_v=S^n \cap \bbr^{k+2}_v.\ee
Clearly, every point $\th = (\th_1, \ldots, \th_{n+1}) \in S^n$ belongs to some $S^{k+1}_v$. Specifically, if $\th' = (\th_1, \ldots, \th_{n-k})\neq 0$, then $v=\th'/|\th'|$. If $\th' =0$ then $\th \in S^{k+1}_v$ for all $v\in S^{n-k-1}$. We define
\be\label {hfos4609bu2x}
 \tilde G_n=\{\xi\in G_{n+1, k+1}: \xi\subset \bbr^{k+2}_v \quad \mbox{\rm for some $ v\in S^{n-k-1}$}\},\ee
 which is a fiber bundle over $S^{n-k-1}$ with fibers isomorphic to the Grassmannian $G_{k+2, k+1}$. It will be shown that $\tilde G_n$ is an admissible complex for $\F_k $.
 
  Clearly, $\dim \tilde G_n=n$.    Furthermore, every $\xi\in \tilde G_n$ can be parametrized as $\xi=\xi (v,w)$ where $v\in S^{n-k-1}$, $w\in S^{k+1}_v$, $w\perp \xi$, 
   so that $\xi (v,w)=\xi (\pm v, \pm w)$ with any combination of pluses and minuses. We denote
 \be\label {hfos4609bu33} \tilde S_n =\{ (v,w): \;v\in S^{n-k-1}, \, w\in S^{k+1}_v\}.\ee
 and equip $\tilde S_n $ with the product measure $d_*v d_*w$ where
 $d_*v$ and $d_*w$ stand for the corresponding  probability measures on $S^{n-k-1}$ and    $S^{k+1}_v$, respectively.
  The  transformation (\ref{hfos4609bu0}) restricted to $\tilde G_n$ can be realized as
\be\label {hfos4609bu4}
 (\tilde \F_k f)(v,w)=\intl_{\{\th \in  S^{k+1}_v: \; \th \cdot w=0\}} f(\th) \,d_{v,w} \th, \qquad (v,w)\in \tilde S_n, \ee
which is the usual Funk transform on  $S^{k+1}_v$ with
 the relevant normalized surface measure $d_{v,w} \th$.  Clearly,
$(\tilde \F_k f)(v,w) = (\tilde \F_k f)(\pm v, \pm w)$.

 \subsection{Existence on $L^p$-functions}
  Fix  any $v\in S^{n-k-1}$ and choose an orthogonal transformation $\gam_v$ in the coordinate plane  $\bbr^{n-k}$,  so that $\gam_v  e_{n-k}=v$. Let
\be\label {hfos4609bu5a}
\tilde \gam_v =\left[\begin{array}{ll}  \gam_v &0\\
0& I_{k+1}
\end{array}\right] \in O(n+1).\ee
Then
\be\label {hoopmvru4}
(\tilde \F_k f)(v,\tilde \gam_v \zeta)=(\F f_v)(\zeta), \qquad  f_v(\eta)=f (\tilde \gam_v \eta),\ee
where
\be\label {hoopmvru9}  (\F f_v)(\zeta)\!\equiv\!\intl_{ \eta \cdot\zeta=0}   f_v(\eta)\, d_\zeta \eta \ee
is the usual Funk transform on the sphere $S^{k+1}$ defined by
\be\label {hoo34d4pp}
S^{k+1}=S^n \cap \bbr^{k+2},  \qquad \bbr^{k+2}=\bbr e_{n-k}\oplus \cdots \oplus  \bbr e_{n+1}.\ee
 Thus, the existence of $\tilde \F_k f$ is equivalent to the existence of the Funk transform  (\ref{hoopmvru9}). The latter is well-defined whenever $f_v\in L^1 (S^{k+1})$ and may not exist otherwise (take, e.g., $f_v(\eta) =|\eta_{n+1}|^{-1}\notin L^1 (S^{k+1})$, for which $(\F f_v)(\zeta)\equiv \infty$; cf. \cite[formula (2.12)]{Ru02b}). This observation yields the following.
 \begin{theorem} Let   $1\le k\le n-1$,
\be  \label {hfuuuu82} \intl_{S^n} |f(\th)|\, \frac{ d_*\th}{|\th'|^{n-k-1}}<\infty,\ee
where  $\th'=(\th_1, \ldots, \th_{n-k})$ is the orthogonal projection of $\th$ onto the coordinate plane $\bbr^{n-k}=\bbr e_1 \oplus \cdots \oplus  \bbr e_{n-k}$.
Then
\be\label {hfos4609bu82}
\intl_{\tilde S_n} (\tilde \F_k f)(v,w)\, d_*v d_*w=\frac{2\, \sig_{n}}{\sig_{k+1}\sig_{n-k-1}}\,\intl_{S^n} f(\th)\, \frac{ d_*\th}{|\th'|^{n-k-1}}.\ee
\end{theorem}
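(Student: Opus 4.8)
The plan is to integrate first in $w$ with $v$ held fixed, then in $v$, reducing the whole identity to a single change of variables on $S^n$. Fix $v\in S^{n-k-1}$ and recall from (\ref{hoopmvru4}) that $(\tilde\F_k f)(v,\tilde\gam_v\zeta)=(\F f_v)(\zeta)$, where $\F$ is the ordinary Funk transform on $S^{k+1}$ as in (\ref{hoopmvru9}) and $f_v=f\circ\tilde\gam_v$. Since $\tilde\gam_v\in O(n+1)$ restricts to an orthogonal map of $\bbr^{k+2}$ onto $\bbr^{k+2}_v$, the map $\zeta\mapsto\tilde\gam_v\zeta$ is an isometry of $S^{k+1}$ onto $S^{k+1}_v$ that carries $d_*\zeta$ to $d_*w$, so the inner integral becomes $\intl_{S^{k+1}}(\F f_v)(\zeta)\,d_*\zeta$. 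The first key step is the integral-preservation identity $\intl_{S^{k+1}}(\F g)(\zeta)\,d_*\zeta=\intl_{S^{k+1}}g(\eta)\,d_*\eta$ for the normalized Funk transform. This follows from Fubini: the measure $d_*\zeta\,d_\zeta\eta$ lives on the incidence set $\{(\zeta,\eta):\eta\cdot\zeta=0\}$, is $O(k+2)$-invariant and symmetric under interchanging $\zeta$ and $\eta$, and $d_\zeta\eta$ is a probability measure, so interchanging the roles of $\zeta$ and $\eta$ leaves only $\intl_{S^{k+1}}g(\eta)\,d_*\eta$. Applying this with $g=f_v$ and undoing the isometry $\tilde\gam_v$ yields $\intl_{S^{k+1}_v}(\tilde\F_k f)(v,w)\,d_*w=\intl_{S^{k+1}_v}f(\th)\,d_*\th$.

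Integrating in $v$, the left-hand side of (\ref{hfos4609bu82}) equals $\intl_{S^{n-k-1}}d_*v\intl_{S^{k+1}_v}f(\th)\,d_*\th$, and it remains to evaluate this purely geometric double integral and match constants. I would parametrize $S^{k+1}_v$ by $\th=tv+\sqrt{1-t^2}\,\xi$ with $t\in[-1,1]$ and $\xi\in S^k\subset\bbr^{k+1}$, for which the normalized surface measure is $d_*\th=\sig_{k+1}^{-1}(1-t^2)^{(k-1)/2}\,dt\,d\xi$; here $\th'=tv$ and $\th''=\sqrt{1-t^2}\,\xi$, so $|\th'|=|t|$. The substitution $t\to-t$, $v\to-v$ leaves the integrand and $d_*v$ invariant and shows the contributions of $t<0$ and $t>0$ coincide, producing the factor $2$; after setting $t=\sin\psi$ with $\psi\in[0,\pi/2]$ one obtains
\[ \intl_{S^{n-k-1}}\!\!d_*v\intl_{S^{k+1}_v}\!\! f(\th)\,d_*\th =\frac{2}{\sig_{n-k-1}\sig_{k+1}}\intl_0^{\pi/2}\!\!\cos^k\!\psi\,d\psi\intl_{S^{n-k-1}}\!\! du\intl_{S^k}\!\! f(\sin\psi\,u,\cos\psi\,\xi)\,d\xi. \]
On the other side, writing the weighted integral over $S^n\subset\bbr^{n-k}\oplus\bbr^{k+1}$ in the bi-spherical coordinates $\th=(\sin\psi\,u,\cos\psi\,\xi)$, for which $d\th=\sin^{n-k-1}\!\psi\,\cos^{k}\!\psi\,d\psi\,du\,d\xi$ and $|\th'|=\sin\psi$, the singular weight $|\th'|^{-(n-k-1)}$ cancels the Jacobian factor $\sin^{n-k-1}\!\psi$ exactly, so that $\intl_{S^n}f(\th)\,|\th'|^{-(n-k-1)}\,d_*\th$ equals $\sig_n^{-1}$ times the same triple integral. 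Comparing the two expressions produces the constant $2\sig_n/(\sig_{k+1}\sig_{n-k-1})$, which is precisely (\ref{hfos4609bu82}).

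All manipulations are first carried out for $f\ge0$, where Tonelli's theorem applies at every step and the identity holds in $[0,\infty]$; the hypothesis (\ref{hfuuuu82}) is exactly the assertion that the right-hand side is finite, which then justifies the use of Fubini for general $f$ by splitting into positive and negative parts. The main obstacle is not conceptual but is the careful bookkeeping of measures in the last change of variables: keeping the probability normalizations $\sig_{k+1}$, $\sig_{n-k-1}$ straight, correctly accounting for the $\pm v$ double cover that yields the factor $2$, and verifying the exact cancellation of the singular weight $|\th'|^{-(n-k-1)}$ against $\sin^{n-k-1}\!\psi$. Once the coordinates on $S^{k+1}_v$ and on $S^n$ are chosen consistently, the constant drops out and the two sides coincide.
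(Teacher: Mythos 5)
Your proposal is correct and follows essentially the same route as the paper: reduce the inner integral via $\tilde\gam_v$ to the ordinary Funk transform on $S^{k+1}$, apply the integral-preservation (duality) identity $\int (\F g)\,d_*\zeta=\int g\,d_*\eta$, and then convert the iterated integral over $v$ and $S^{k+1}_v$ into bi-spherical coordinates on $S^n$, with the factor $2$ coming from the $(v,t)\leftrightarrow(-v,-t)$ double cover and the weight $|\th'|^{-(n-k-1)}$ cancelling the Jacobian. The only differences are cosmetic (your angle convention swaps sine and cosine relative to the paper's, and you supply the short Fubini argument for the Funk duality that the paper takes as known).
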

\begin{proof} If $k=n-1$, then  (\ref{hfos4609bu82}) is  a particular case of the duality for Radon-type transforms \cite{He11}. Denote the left-hand side of (\ref{hfos4609bu82}) by $I$. Since
\[
\intl_{S^{k+1}} (\F f_v)(\zeta) \, d_*\z =\intl_{S^{k+1}} f_v(\eta)\,d_*\eta, \qquad  f_v(\eta)=f (\tilde \gam_v \eta),\]
then, by (\ref{hoopmvru4}),
\bea &&I\!=\!\!\intl_{S^{n-k-1}} \!\!\!d_*v \!\intl_{S^{k+1}_v}(\tilde \F_k f)(v,w) d_*w\!=\!\!\!
\intl_{S^{n-k-1}} \!\!\!\!d_*v\!\!\intl_{S^{k+1}} \!(\F f_v)(\zeta) \, d_*\z\nonumber\\
&&\label {hfos466682}  \quad  \!=\!\intl_{S^{n-k-1}}\!\!\! d_*v\intl_{S^{k+1}} f_v(\eta)\,d_*\eta=\frac{1}{\sig_{k+1}}\intl_{S^{n-k-1}} d_*v\intl_{S^{k+1}}f (\tilde \gam_v \eta)\,d\eta\\
&&\quad \!=\!\frac{1}{\sig_{k+1}}\intl_{S^{n-k-1}} \!\!\!d_*v\intl_0^\pi \sin^k\psi\, d\psi\intl_{S^{k}} f(v\cos \psi + \om\sin \psi)\, d\om.\nonumber\eea
Using the bi-spherical coordinates \cite [pp. 12, 22]{VK}
\be \th= v\cos \psi + \om\sin \psi, \quad d\th=\sin^k\psi\,\cos^{n-k-1} \psi\, d\psi dv d\om,\ee
\[  v\in S^{n-k-1}\subset \bbr^{n-k}, \qquad \om \in S^k\subset \bbr^{k+1}, \qquad  0\le \psi\le \pi/2,\]
 and noting that $\cos \psi=|\th'|$, we continue:
\bea I&=&\frac{2}{\sig_{k+1}\sig_{n-k-1}}\intl_0^{\pi/2} \sin^k\psi\, d\psi\intl_{S^{n-k-1}} dv\intl_{S^{k}} f(v\cos \psi + \om\sin \psi)\, d\om\nonumber\\
&=&\frac{2\, \sig_{n}}{\sig_{k+1}\sig_{n-k-1}}\intl_{S^n} f(\th)\, \frac{ d_*\th}{|\th'|^{n-k-1}},\nonumber\eea
as desired.
\end{proof}
 \begin{corollary}  Let   $1\le k< n-1$, $f\in L^p(S^n)$, $n-k<p\le \infty$. Then $(\tilde \F_k f)(v,w)$ is finite for almost all $(v,w)\in \tilde S_n$ and the operator $\tilde \F_k$ is bounded from $L^p(S^n)$ to $L^1 (\tilde S_n)$. If $p\le n-k$, then there is a function  $\tilde f\in L^p(S^n)$ for which $(\tilde \F_k \tilde f)(v,w)=\infty$. Specifically, \[\tilde f(\th)=|\th'|^{-1} (1-\log |\th'|)^{-1}, \qquad  \th'=(\th_1, \ldots, \th_{n-k}).\]
\end{corollary}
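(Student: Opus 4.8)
The plan is to derive everything from the duality identity (\ref{hfos4609bu82}) of the preceding Theorem together with the reduction (\ref{hoopmvru4}). For the positive statement, I would first note that $\tilde\F_k$ maps nonnegative functions to nonnegative functions, so $|\tilde\F_k f|\le\tilde\F_k|f|$ pointwise and, applying (\ref{hfos4609bu82}) to $|f|$,
\be \|\tilde\F_k f\|_{L^1(\tilde S_n)}\le \frac{2\,\sig_n}{\sig_{k+1}\sig_{n-k-1}}\intl_{S^n}|f(\th)|\,\frac{d_*\th}{|\th'|^{n-k-1}}.\ee
By H\"older's inequality the right-hand side is at most $\|f\|_{L^p(S^n)}$ times the $L^{p'}$-norm of the weight $|\th'|^{-(n-k-1)}$, so the matter reduces to deciding when $|\th'|^{-(n-k-1)p'}$ is integrable over $S^n$. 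Passing to the bi-spherical coordinates $\th=v\cos\psi+\om\sin\psi$ (so that $|\th'|=\cos\psi$ and $d\th=\sin^k\psi\,\cos^{n-k-1}\psi\,d\psi\,dv\,d\om$), the angular factor is proportional to $\int_0^{\pi/2}\sin^k\psi\,(\cos\psi)^{(n-k-1)(1-p')}\,d\psi$, which converges near $\psi=\pi/2$ exactly when $(n-k-1)(p'-1)<1$, i.e. $p>n-k$ (and trivially when $p=\infty$). This simultaneously verifies hypothesis (\ref{hfuuuu82}), so the Theorem applies and yields both the a.e. finiteness of $\tilde\F_k f$ and its boundedness into $L^1(\tilde S_n)$.

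For the sharpness I would check that the proposed $\tilde f$ lies in the critical space $L^{n-k}(S^n)\subset L^p(S^n)$ for every $p\le n-k$. In the same coordinates $\int_{S^n}\tilde f^{\,n-k}\,d\th$ reduces up to a constant to $\int_0^{\pi/2}\sin^k\psi\,(\cos\psi)^{-1}(1-\log\cos\psi)^{-(n-k)}\,d\psi$; after the substitution $t=\pi/2-\psi$, $\cos\psi\sim t$, this behaves like $\int_0 t^{-1}(\log(1/t))^{-(n-k)}\,dt$, which converges precisely because $n-k>1$ (this is where the hypothesis $k<n-1$ enters). The logarithmic factor is exactly what pushes $\tilde f$ into $L^{n-k}$ while leaving it singular enough to blow up the transform.

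Finally, to prove $(\tilde\F_k\tilde f)(v,w)\equiv\infty$, I would transfer the computation to $S^{k+1}$ via (\ref{hoopmvru4}). A direct computation gives $|(\tilde\gam_v\eta)'|=|\eta_{n-k}|$, so $\tilde f_v(\eta)=|\eta_{n-k}|^{-1}(1-\log|\eta_{n-k}|)^{-1}$ depends only on the coordinate $\eta_{n-k}$; as $\zeta$ ranges over $S^{k+1}$, $w=\tilde\gam_v\zeta$ exhausts $S^{k+1}_v$, so it suffices to show that the Funk transform (\ref{hoopmvru9}) of $\tilde f_v$ is infinite on every great $k$-sphere $C_\zeta=\{\eta\in S^{k+1}:\eta\cdot\zeta=0\}$. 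If $\zeta=\pm e_{n-k}$ then $C_\zeta=\{\eta_{n-k}=0\}$ and the integrand is identically $+\infty$; otherwise $C_\zeta$ meets $\{\eta_{n-k}=0\}$ transversally in a great $(k-1)$-sphere along which $\eta_{n-k}|_{C_\zeta}$ vanishes to first order. Choosing a transverse coordinate $r$ to this $(k-1)$-sphere inside $C_\zeta$, so that $|\eta_{n-k}|\asymp|r|$ and the surface measure factorizes as $dr$ times a smooth measure, the integral is bounded below by a constant times $\int_0 dr/(|r|\,(1+\log(1/|r|)))=\infty$. I expect this last geometric step — establishing the transversality and the first-order vanishing of $\eta_{n-k}$ on $C_\zeta$ uniformly in $\zeta$, and hence the logarithmic divergence of the singular integral in every configuration — to be the main obstacle, whereas the H\"older estimate and the bi-spherical change of variables are routine.
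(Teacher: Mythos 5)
Your proposal is correct and follows essentially the same route as the paper: the positive part via the duality identity (\ref{hfos4609bu82}) applied to $|f|$ plus H\"older with the weight $|\th'|^{-(n-k-1)}$ computed in bi-spherical coordinates, membership of $\tilde f$ in $L^p$ by the same change of variables (the paper checks each $p\le n-k$ directly rather than via the inclusion $L^{n-k}\subset L^p$), and divergence of $\tilde \F_k\tilde f$ by reducing through (\ref{hoopmvru4}) to the Funk transform of $|\eta_{n-k}|^{-1}(1-\log|\eta_{n-k}|)^{-1}$ on $S^{k+1}$. The transversality step you single out as the main obstacle is handled in the paper by the explicit substitution $\eta=r_\zeta u$, which gives $|\eta_{n-k}|=h\,|u\cdot\sig|$ with $\sig\in S^k$ fixed and turns the integral into $\frac{2\sig_{k-1}}{h}\int_0^1\frac{(1-t^2)^{k/2-1}}{t\,(1-\log(th))}\,dt=\infty$, so no separate geometric argument is needed.
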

 \begin{proof} The first statement follows from (\ref{hfos4609bu82}) by H\"older's inequality, because the integral
 \[\intl_{S^n}\frac{ d_*\th}{|\th'|^{(n-k-1)p'}}=\sig_{k}\,\sig_{n-k-1}\intl_0^{\pi/2} \sin^k\psi\,\cos^{(n-k-1)(1-p')} \psi\, d\psi, \quad \frac{1}{p}+ \frac{1}{p'}=1,\]
 is finite if $n-k<p$. To prove the second statement, we have,
 \bea\intl_{S^n} |\tilde f(\th)|^p\,d\th&=&\sig_{k}\,\sig_{n-k-1}\intl_0^{\pi/2}  (1-\log (\cos \psi))^{-p} \sin^k\psi\,\cos^{n-k-1-p} \psi\, d\psi\nonumber\\
 &=&\sig_{k}\,\sig_{n-k-1}\intl_0^1  (1-\log s)^{-p} (1-s^2)^{(k-1)/2} \,s^{n-k-1-p} \, ds.\nonumber\eea
 This integral is finite  if $p\le n-k$, $1\le k< n-1$.

 Let us show that $(\tilde \F_k \tilde f)(v,w)=\infty$. By (\ref{hoopmvru4}), it suffices to prove that $(\F \tilde f_v)(\z)=\infty$. For $\eta =(\eta_{n-k}, \ldots, \eta_{n+1})\in S^{k+1}\subset  \bbr^{k+2}$ (see (\ref {hoo34d4pp})), we have $\tilde f_v(\eta)= |\eta_{n-k}|^{-1}\,(1-\log |\eta_{n-k}|)^{-1}$.
  Let $r_\z$ be an orthogonal transformation in  $\bbr^{k+2}$ such that $r_\z e_{n-k}=\z$, and let $S^k$ be  the unit sphere in the plane $\bbr^{k+1}=\bbr e_{n-k+1}\oplus \cdots \oplus  \bbr e_{n+1}$. Setting $\eta=r_\z u$, we have
 \bea (\F \tilde f_v)(\z)&=&\intl_{ \eta \cdot\zeta=0}  |\eta_{n-k}|^{-1}\,(1-\log |\eta_{n-k}|)^{-1} \,d_\zeta \eta\nonumber\\
&=&\intl_{S^k} |r_\z u \cdot e_{n-k}|^{-1}\, (1-\log |r_\z u \cdot e_{n-k}|)^{-1}\, du,\nonumber\eea
where
\[
|r_\z u \cdot e_{n-k}|=|u \cdot r_\z^{-1}e_{n-k}|=|u \cdot \Pr_{\bbr^{k+1}} (r_\z^{-1}e_{n-k})|= h\, |u\cdot \sig|,\]
\[ h=|\Pr_{\bbr^{k+1}} (r_\z^{-1}e_{n-k})|, \quad  \sig=\Pr_{\bbr^{k+1}} (r_\z^{-1}e_{n-k})/h \in S^k. \]
Hence,
\bea
&&(\F \tilde f_v)(\z)=\intl_{S^k}  (h\, |u\cdot \sig|)^{-1}\, (1-\log (h\, |u\cdot \sig|))^{-1}\, du\nonumber\\
&&=\frac{2\sig_{k-1}}{h}\intl_0^1 \frac{(1-t^2)^{k/2 -1}}{ t\,(1-\log (th))}\,dt\ge \frac{c_k}{h} \intl_0^{1/2} \frac{dt}{t\,(1-\log (th))},\nonumber\eea
$c_k=\const$. The last integral diverges.
\end{proof}

\subsection{Inversion formulas}
To reconstruct $f$  from
$\tilde \F_k f$, it suffices to invert the usual Funk transform $\F$ in (\ref{hoopmvru4}) by any known method; see, e.g., \cite{GGG03, He11, Pa04, Ru98a, Ru02a, Ru02b, Ru13a}. Let $\vp(v,w)= (\tilde \F_k f)(v,w)$, $\vp_v(\z)=\vp(v,\tilde \gam_v \z)$, where $\tilde \gam_v$ has the form  (\ref{hfos4609bu5a}). Then
\be\label {hfos4609bu6} f_v(\eta) \equiv f(\tilde \gam_v \eta)= (\F^{-1} \vp_v)(\eta).\ee
If $f$ is a continuous function, its value
 at a point $\th \in S^n$ can be found as follows. Interpret  $\th$ as a column vector
$\th=(\th_1, \ldots, \th_{n+1})^T$ and set
\[\th'=(\th_1, \ldots, \th_{n-k})^T\in \bbr^{n-k},\qquad \th''=(\th_{n-k+1}, \ldots, \th_{n+1})^T\in \bbr^{k+1},\]
\be\label {hfos4609bu71}  v=\th'/|\th'|\in S^{n-k-1}\subset\bbr^{n-k}, \ee
\be\label {hfos4609bu72} \eta=(0, \ldots, 0,|\th'|, \th'')^T\in S^{k+1}\subset \bbr^{k+2}.\ee
If $\th'\neq 0$, then $\tilde \gam_v \eta=\th$ and we get
\be\label {hfos4609bu7} f(\theta)= (\F^{-1} \vp_v)(\eta),\qquad \vp_v(\zeta)=\vp (v, \tilde \gam_v \zeta).\ee
If $\th'=0$, then $f(\theta)$ can be reconstructed by continuity.

 If $f$ is an arbitrary  function  satisfying (\ref{hfuuuu82}), for instance,  $f\in L^p(S^n)$, $n-k<p\le \infty$, then the integral
\[\intl_{S^{n-k-1}}\!\!\! d_*v\intl_{S^{k+1}} |f_v(\eta)|\,d_*\eta\]
is finite; see calculations after (\ref{hfos466682}). Hence, by (\ref{hfos4609bu6}), $f$ can be explicitly reconstructed at almost all points on almost all spheres $S^{k+1}_v$ by making use of known inversion formulas for the Funk transform on this class of functions; see, e.g., \cite{Ru02a, Ru02b, Ru13a}.

\section{The Hyperbolic Case}

The hyperbolic totally geodesic Radon transform  assigns to each sufficiently good function
$f$ on the real hyperbolic space $\bbh^n$ the collection of integrals of $f$ over $k$-dimensional
totally geodesic submanifolds of $\bbh^n$. If $k<n-1$, the inversion problem for this transform is overdetermined and we shall construct the corresponding admissible complex, using the same idea as in the spherical  case. Realization of this idea relies on the geometry of $\bbh^n$.

\subsection{Preliminaries}

We recall basic facts. More details can be found in  \cite{BR99a, BR99b, Ru02c, VK}; see also \cite {GGG03, GGV}. Let $\bbe^{n, 1}\sim \bbr^{n+1}, \; n \ge 2$, be the real pseudo-Euclidean space
of points $x = (x_1, \dots, x_{n+1})$ with the inner product
\be\label {hfohhh}
[x, y] = -x_1y_1-\dots - x_ny_n+x_{n+1} y_{n+1}.\ee  The
$n$-dimensional real hyperbolic
space $\bbh^n$ will be realized as the ``upper" sheet of the two-sheeted
hyperboloid \be \bbh^n = \{ x \in \bbe^{n, 1}: [x, x] = 1,
x_{n+1} > 0\};\ee
dist$(x,y)=\cosh^{-1}[x,y]$ is the geodesic distance between  the points $x$ and $y$ in $\bbh^n$. The
hyperbolic coordinates $\theta_1, \ldots, \theta_{n -1}, r $ of a point
$x\in \bbh^n$ are defined by
\be\label {hfohhh1tag 2.3}  \left \{\begin{array} {l} x_1 = \text{sinh} r \sin \theta_{n -1} \ldots
\sin \theta_2 \sin \theta_1, \\
x_2 = \text{sinh} r \sin \theta_{n -1} \ldots \sin \theta_2 \cos \theta_1, \\
......................................................\\
x_n = \text{sinh} r \cos \theta_{n -1}, \\
x_{n +1} = \text{cosh} r,
\end{array}\right.
\ee
where $0 \le \theta_1 < 2 \pi; \ 0 \le \theta_j < \pi, \ 1 < j \le n - 1; \ 0 \le r < \infty$.
 Every $x \in \bbh^n$ is representable as
\be\label {hfrry3}  x = \theta\,\sinh \,r  +  e_{n+1}\,\cosh \,r \ee where
$\theta$ is a point of the unit sphere $S^{n -1}$ in $\bbr^n=\bbr e_1 \oplus \cdots \oplus \bbr e_{n}$ with
Euler's angles  $\theta_1, \ldots, \theta_{n -1}$.
 We denote by $SO_0 (n, 1)$  the  connected group  of pseudo-rotations
of $E^{n,1}$ which preserve the bilinear form  (\ref{hfohhh});  $SO (n)$ is the
rotation group in $\rn$ which is identified with  the subgroup of all pseudo-rotations leaving $e_{n +1}$ fixed.
 The $SO (n)$-invariant measure $d \theta$ on $S^{n -1}$ is  defined by
\be\label {hfohhh1tag 2.4} d \theta \!=\! J (\theta ) \ d \theta_1 \ldots d \theta_{n -1}, \quad J (\theta) \!=\! \sin^{n -2} \theta_{n -1}
\sin^{n - 3} \theta_{n - 2} \ldots \sin \theta_2, \ee
 so that
$ \sigma_{n -1} = \int_{S^{n -1}} \,d \theta = 2 \pi^{n/2} / \Gamma (n/2)$.
The $SO_0 (n, 1)$-invariant measure $dx $ on $\bbh^n$  can be defined     by
\be\label {hfohhh1tag 2.7}  d x = \text{sinh}^{n -1} r \ d \theta d r \ee
 where
 $d \theta$ has the form (\ref{hfohhh1tag 2.4}). Then
 \be\label {hfrr2a3}\intl_{\bbh^n} f(x)\, dx=\intl_0^\infty \sinh^{n -1} r \,d r\intl_{S^{n -1}} f(\theta\,\sinh \,r  +  e_{n+1}\,\cosh\, r)\,d \theta.\ee
 We will need a generalization of (\ref{hfrry3}) of the form
\be\label {hfohhh2}  x =
v\,\sinh\,r +u \,\cosh\,r,\ee
\[ v\in
S^{n-k-1}\!\subset\!\bbr^{n-k}, \quad u \!\in\!\bbh^k\!\subset\!\bbe^{k, 1} \!\sim \!\bbr^{k+1},\quad 0\! \le\! r\!<\!\infty, \quad 0\!\le \! k\!<\!n,\]
where
 $ \bbr^{n-k}$ and $ \bbr^{k+1}$ have the same meaning as in (\ref{ttt609bu0cc}). Then
\be\label {hfohhh3} dx= dv \,du\, d\nu(r), \quad
d\nu(r)=\sinh^{n-k-1}\, r\,\cosh^k\, r \, dr,\ee
$dv$ and $du$ being the Riemannian measures on
$S^{n-k-1}$  and $\bbh^k$, respectively; see \cite [pp. 12, 23]{VK}.  Owing to (\ref{hfohhh2}),
\be\label {hfohhh4}
 \intl_{\bbh^n} f(x)\,dx=\intl^\infty_0 d\nu(r)\intl_{S^{n-k-1}}dv \intl_{\bbh^k}
 f(v\,\sinh\,r +u \,\cosh\,r) \, du.\ee
The case $k=0$  agrees with  (\ref{hfrr2a3}). If $k=n-1$, then (\ref{hfohhh4}) yields
\be\label {hfoeehhh4}
 \intl_{\bbh^n} f(x)\,dx=\intl^\infty_{-\infty} \cosh^{n -1} r \,d r\intl_{\bbh^{n-1}}
f(e_1\,\sinh\,r  + u\,\cosh\,r) \, du.\ee

 We will also need the one-sheeted
hyperboloid
\be\label {hfohhh1}\overset {*}{\bbh}{}^n = \{ y \in E^{n, 1} : [y, y]= - 1 \}.\ee
Every point
$y \in \overset {*}{\bbh}{}^n$ is  representable as
$y =  \sigma\, \cosh \,\rho +e_{n+1}\,\sinh \,\rho$ where $- \infty < \rho < \infty$ and
$\sigma \in S^{n -1}$.
In this notation the $SO_0 (n, 1)$-invariant measure $dy$ on $\overset {*}{\bbh}{}^n$ has the form
\be\label {hfohhh1tag 2.11} dy = \text{cosh}^{n -1} \rho \ d \sigma d \rho. \ee

The hyperbolic  Radon
transform of a sufficiently good function $f$ on $\bbh^n$ is defined as a function on  $\overset {*}{\bbh}{}^n$ by the formula
\be\label{hyptag31} (\H f) (y)  = \int\limits_{\xi_y}
f (x) \! \ d_y x, \quad \xi_y \!= \!\{ x \!\in\!
\bbh^n : [ x, y] \!=\! 0 \}, \quad y\in \overset {*}{\bbh}{}^n,\ee
where the measure
$d_y x$  is the image of the standard measure on the $(n-1)$-dimensional hyperboloid $\xi_{e_n}=\{x\in \bbh^n: x_n=0\}$  under the transformation
$\omega_y \in SO_0(n, 1)$ satisfying $\omega_y \xi_{e_n} = \xi_y$.

\begin{theorem}\label{hyptag31th} \cite[Corollaries 3.7, 3.8]{BR99a} The integral (\ref{hyptag31}) is
 finite for almost all $y \in \overset {*}{\bbh}{}^n$ whenever
\be\label{hyptag31f}
\intl_{\bbh^{n}}|f(x)| \, \frac{dx}{x_{n+1}}  <\infty.\ee
In particular, (\ref{hyptag31}) is finite a.e. if $f \in L^p (\bbh^n)$,
\be\label{hyptavvg31f} 1 \le p < (n -1)/ (n -2).\ee The condition (\ref{hyptavvg31f}) is sharp.   Moreover, for every $\sigma \in S^{n -1}$,
\be\label{hyptag31fa1}
\intl_{-\infty}^\infty (\H f) (\sigma\, \cosh \,\rho +e_{n+1}\,\sinh \,\rho)\, \frac{d\rho}{\cosh \,\rho}=\intl_{\bbh^{n}} f(x) \, \frac{dx}{x_{n+1}}\ee
and this expression does not exceed $c\, ||f||_p$, $c=\const$.
  \end{theorem}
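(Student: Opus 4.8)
The plan is to extract everything from the single duality identity (\ref{hyptag31fa1}); once that is established for nonnegative $f$, the finiteness assertion, the $L^p$ range, and the norm bound all follow by soft arguments. First I would fix $\sig\in S^{n-1}$ and use the rotational invariance of $\H$ and of the weight $x_{n+1}^{-1}$ to reduce to $\sig=e_n$: if $\gam\in SO(n)\subset SO_0(n,1)$ sends $e_n$ to $\sig$, then $(\H f)(\gam y)=(\H(f\circ\gam))(y)$, while $(\gam x)_{n+1}=x_{n+1}$ and $dx$ is invariant, so both sides of (\ref{hyptag31fa1}) are unchanged under $f\mapsto f\circ\gam$. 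For $\sig=e_n$ the one-parameter family $y_\rho=e_n\cosh\rho+e_{n+1}\sinh\rho$ is the orbit of $e_n$ under the boost $g_\rho\in SO_0(n,1)$ acting in the $(e_n,e_{n+1})$-plane, and since $[g_\rho x,y_\rho]=[x,e_n]=-x_n$ one has $g_\rho\,\xi_{e_n}=\xi_{y_\rho}$. Hence $g_\rho$ may be taken as the map $\om_{y_\rho}$ in the definition (\ref{hyptag31}), and $(\H f)(y_\rho)=\intl_{\bbh^{n-1}}f(g_\rho u)\,du$, where $\bbh^{n-1}=\xi_{e_n}$ carries its standard measure.

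The heart of the matter is the change of variables $(\rho,u)\mapsto x=g_\rho u$ from $\bbr\times\bbh^{n-1}$ onto $\bbh^n$. Writing $u=\om\sinh t+e_{n+1}\cosh t$ with $\om\in S^{n-2}$, $t\ge0$, this reads $x=(\om\sinh t,\ \cosh t\sinh\rho,\ \cosh t\cosh\rho)$, a bijection because $\tanh\rho=x_n/x_{n+1}$ is uniquely solvable ($|x_n|<x_{n+1}$ on $\bbh^n$). I would compute the induced Riemannian metric directly and obtain $ds^2=dt^2+\cosh^2 t\,d\rho^2+\sinh^2 t\,d\om^2$, so that $dx=\cosh t\,\sinh^{n-2}t\,dt\,d\rho\,d\om=\cosh t\,du\,d\rho$ (here $du=\sinh^{n-2}t\,dt\,d\om$ on $\bbh^{n-1}$); the decisive point is that $x_{n+1}=\cosh t\cosh\rho$. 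Substituting into $\intl_{-\infty}^\infty(\H f)(y_\rho)\,d\rho/\cosh\rho=\intl\intl f(x)\,du\,d\rho/\cosh\rho$ and using $du\,d\rho=dx/\cosh t$ gives exactly $\intl_{\bbh^n}f(x)\,dx/(\cosh t\cosh\rho)=\intl_{\bbh^n}f(x)\,dx/x_{n+1}$, which is (\ref{hyptag31fa1}). Applied first to $|f|$ by Tonelli, this shows that if (\ref{hyptag31f}) holds then, for each $\sig$, $(\H|f|)(y_\rho)<\infty$ for a.e.\ $\rho$; since $dy=\cosh^{n-1}\rho\,d\sig\,d\rho$ on $\overset{*}{\bbh}{}^n$, Fubini in $\sig$ upgrades this to finiteness for a.e.\ $y$, and the identity for signed $f$ then follows by absolute convergence.

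For the $L^p$ statement I would apply H\"older's inequality to (\ref{hyptag31f}): the weight lies in $L^{p'}(\bbh^n)$ iff $\intl_{\bbh^n}x_{n+1}^{-p'}\,dx=\sig_{n-1}\intl_0^\infty\cosh^{-p'}r\,\sinh^{n-1}r\,dr<\infty$, which by the asymptotics $\sinh^{n-1}r\,\cosh^{-p'}r\sim e^{(n-1-p')r}$ holds precisely when $p'>n-1$, i.e.\ $p<(n-1)/(n-2)$. This simultaneously yields finiteness a.e.\ on $L^p$ in that range, the bound $\intl_{\bbh^n}|f|\,dx/x_{n+1}\le c\,\|f\|_p$, and hence the asserted estimate for the left side of (\ref{hyptag31fa1}).

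Sharpness is the remaining, more delicate point. For $p>(n-1)/(n-2)$ the function $f_0(x)=x_{n+1}^{-(n-2)}$ already works: it lies in $L^p$ since $(n-2)p>n-1$, yet $(\H f_0)(y_\rho)=\cosh^{-(n-2)}\rho\,\sig_{n-2}\intl_0^\infty\cosh^{-(n-2)}t\,\sinh^{n-2}t\,dt\equiv\infty$ because the last integrand tends to $1$ as $t\to\infty$. At the endpoint $p=(n-1)/(n-2)$ a logarithmic correction is needed, exactly as for the spherical counterexample of the previous section: taking $f_0(x)=x_{n+1}^{-(n-2)}(1+\log x_{n+1})^{-b}$ with $(n-2)/(n-1)<b\le1$ keeps $f_0\in L^p$ while leaving the $t$-integral divergent. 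I expect the metric/Jacobian computation of the second paragraph, together with pinning down this endpoint example, to be the main technical obstacle; the $L^p$ range and the norm bound are then routine consequences of the identity.
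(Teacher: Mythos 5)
The paper offers no internal proof of this theorem: it is imported verbatim from \cite[Corollaries 3.7, 3.8]{BR99a}, so there is nothing in the text to compare your argument against except the citation. Your derivation is, as far as I can check, correct and self-contained, and it follows the duality/projection-formula route that underlies the cited corollaries. The central computation is right: with $x=g_\rho u$, $u=\om\sinh t+e_{n+1}\cosh t$, the induced metric is $dt^2+\cosh^2 t\,d\rho^2+\sinh^2 t\,d\om^2$, whence $dx=\cosh t\,du\,d\rho$ and $x_{n+1}=\cosh t\cosh\rho$, which yields (\ref{hyptag31fa1}) for $f\ge 0$ by Tonelli; the H\"older step ($\intl_{\bbh^n}x_{n+1}^{-p'}dx<\infty$ iff $p'>n-1$) and both counterexamples check out ($x_{n+1}^{-(n-2)}\in L^p$ exactly when $(n-2)p>n-1$, while the fiber integral $\intl_0^\infty\tanh^{n-2}t\,dt$ diverges; the logarithmic factor handles the endpoint $p=(n-1)/(n-2)$). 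Two points deserve an explicit sentence: (i) the reduction to $\sig=e_n$ tacitly uses that the measure $d_yx$ in (\ref{hyptag31}) does not depend on the choice of $\om_y$, which holds because the standard measure on $\xi_{e_n}\cong\bbh^{n-1}$ is invariant under the stabilizer of $\xi_{e_n}$ in $SO_0(n,1)$; (ii) passing from ``for each $\sig$, a.e.\ $\rho$'' to ``a.e.\ $y$'' requires joint measurability of $(\sig,\rho)\mapsto(\H|f|)(\sig\cosh\rho+e_{n+1}\sinh\rho)$ before Tonelli is applied in $\sig$ --- routine, but it should be said. Finally, for $n=2$ the sharpness claim degenerates ($(n-1)/(n-2)=\infty$) and is witnessed simply by $f\equiv 1$, which your family contains as the case $n=2$ of $x_{n+1}^{-(n-2)}$.
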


  Explicit inversion formulas for $\H f$ and other properties of this transform can be found in \cite {BC91, BC92, BC96, BR99a, He11, Ru02a, Ru02c, Str81}.

\subsection{Radon transform over $k$-geodesics in $\bbh^n$}

 We denote by  $\Xi_k$ the set of all $k$-dimensional
totally geodesic submanifolds $\xi$ of $ \bbh^n$, $ 1 \le k \le
n-1$.  The corresponding Radon transform  has the form
\be\label {hfohhh5} (\H_k f)(\xi)=\intl_{\{x\in  \bbh^n:\, d(x, \xi)=0\}} f(x) \, d_\xi x, \qquad \xi \in \Xi_k,\ee where $d_\xi x$ is the
 volume element on $\xi$. To give this formula  precise meaning, we set
 $\bbe^{n, 1} = \bbr^{n-k}\times\bbe^{k,1}$ where
\be\label {hfr609b}
\bbr^{n-k} \!=\! \bbr e_1 \oplus \ldots \oplus \bbr e_{n-k},\quad
\bbe^{k,1} \! \sim \!\bbr^{k+1}\!=\! \bbr e_{n-k+1} \oplus \ldots \oplus
\bbr e_{n+1},
\ee
and  let   $ r_\xi \in SO_0 (n, 1)$ be a pseudo-rotation which takes the $k$-dimensional hyperboloid $\bbh^k= \bbh^n\cap\bbr^{k+1}$ to $\xi$.  Then
\be\label {hfohhh5}
 (\H_k f)(\xi)=\intl_{\bbh^k} f(r_\xi x)\,d_{\bbh^k} x\ee
where the measure $d_{\bbh^k} x$ on $\bbh^k$ is defined in a standard way. If $f\in L^p(\bbh^n)$,  this integral is finite for almost all $\xi\in \Xi_k$  provided that
\be\label {hfohhh5fr} 1\le p <(n-1)/(k-1),\ee
 and  this condition is sharp, \cite{BR99b, Str81}.

Suppose $k<n-1$ and define an $n$-dimensional admissible complex $\tilde \Xi_n$ in $\Xi_k$,  so that $f$ can be explicitly reconstructed from $(\H_k f)(\xi)$, $\xi \in \tilde \Xi_n$. We fix a point $v\in S^{n-k-1}\subset \bbr^{n-k}$ (see (\ref{hfr609b})) and denote
\[
\bbr^{k+2}_v \!=\! \bbr v \oplus \bbr^{k+1}, \qquad  \bbh^{k+1}_v=\bbh^{n} \cap \bbr^{k+2}_v;\]
cf. (\ref{hfos4609bu2xy1}). Then we set
\be\label {hfr609b1}  \tilde \Xi_n =\{\xi\in\Xi_k: \xi\subset \bbh^{k+1}_v \quad \mbox{\rm for some } \; v\in S^{n-k-1}\};\ee
cf. (\ref{hfos4609bu2x}). Clearly, $\dim \tilde \Xi_n=n$.  Every $k$-geodesic $\xi\in\tilde \Xi_n$  can be indexed  by the pair  $(v,y)$ where $v\in S^{n-k-1}$ and $y$ is a point of the one-sheeted hyperboloid  $\overset {*}{\bbh}{}^{k+1}_v\!= \!\overset {*}{\bbh}{}^{n}\cap \bbr^{k+2}_v$.   We denote
\be\label {hfmmu3mu}
 \tilde\bbh_n=\{ (v,w): v\in S^{n-k-1}, \; w\in \overset {*}{\bbh}{}^{k+1}_v\}\ee
  and equip this set with the product measure $dvdw$, where $dv$ and $dw$  are canonical measures on  $S^{n-k-1}$ and $\overset {*}{\bbh}{}^{k+1}_v$, respectively; cf. (\ref{hfohhh1tag 2.4}), (\ref{hfohhh1tag 2.11}).    The Radon transform (\ref{hfohhh5}) restricted to $\tilde \Xi_n$ can be realized as
\be\label {hfos4609bu4}
 (\tilde \H_k f)(v,w)=\intl_{\{x\in  \bbh^{k+1}_v:\, [x,w]=0\}} f(x) \,d_{v,w} x, \qquad (v,w)\in \tilde \bbh_n, \ee
 where the measure $d_{v,w} x$  is defined as the image of the corresponding canonical measure on $\bbh^k$. Clearly, (\ref{hfos4609bu4}) is the usual hyperbolic Radon transform on the $(k+1)$-dimensional hyperboloid $\bbh^{k+1}_v$; cf. (\ref{hyptag31}).

 \subsection{Existence on $L^p$-functions}
  Fix   $v\in S^{n-k-1}$ and choose  an arbitrary   rotation $\gam_v$ in   $\bbr^{n-k}$,  so that $\gam_v  e_{n-k}=v$. As in (\ref{hfos4609bu5a}), let
\be\label {hfcdcfa}
\tilde \gam_v =\left[\begin{array}{ll}  \gam_v &0\\
0&I_{k+1}
\end{array}\right]\ee
and change variables in (\ref{hfos4609bu4}) by setting
 $x=\tilde \gam_v  \eta$, $ w=\tilde \gam_v  \z$.
Then $\eta\in \bbh^{k+1}$, $\, \zeta\in \overset {*}{\bbh}{}^{k+1}$,
\be\label {hoo34d4}
\bbh^{k+1}=\bbh^n \cap \bbr^{k+2},  \qquad \bbr^{k+2}=\bbr e_{n-k}\oplus \cdots \oplus  \bbr e_{n+1}.\ee
We get
\be\label {hoyyyru9} (\tilde \H_k f)(v,\tilde \gam_v  \z)=(\H f_v)(\z), \qquad f_v(\eta)=f (\tilde \gam_v \eta),\ee
where
\be\label {hoopmvyyy}  (\H f_v)(\zeta)\!\equiv\!\intl_{\{\eta \in \bbh^{k+1}:\, [\eta, \zeta]=0\}}   f_v(\eta)\, d_\zeta \eta \ee
is the usual hyperbolic Radon transform on $\bbh^{k+1}$. Thus, the existence of $\tilde \H_k f$ is equivalent to the existence of $(\H f_v)(\z)$. The  latter is characterized by Theorem  \ref{hyptag31th} which should be applied to
 $f_v$. To reformulate the conditions of that theorem in terms of $f$, we need the following

\begin{lemma} The equality
\be\label {hoyiii} \intl_{S^{n-k-1}}\!\! \!dv\!\intl_{\bbh^{k+1}}\!\! f_v (\eta)\,d\eta\!=\!2 \intl_{\bbh^n} \frac{f(x)}{|x'|^{n-k-1}}\,dx, \quad x'\!=\!(x_1, \ldots, x_{n-k}),\ee
holds provided that either side of it is finite when $f$ is replaced by $|f|$.
\end{lemma}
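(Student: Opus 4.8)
The plan is to verify the identity by evaluating both sides in the generalized hyperbolic polar coordinates (\ref{hfohhh2})--(\ref{hfohhh4}) and then matching the resulting radial integrals. Since the assertion only claims the equality under the hypothesis that one side (hence, after the computation, both) is finite for $|f|$, I would first reduce to $f\ge 0$: by Tonelli's theorem the finiteness assumption is exactly what licenses every interchange of the order of integration below, and the signed case then follows by splitting $f$ into its positive and negative parts.

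First I would coordinatize the fiber $\bbh^{k+1}\subset\bbr^{k+2}=\bbr e_{n-k}\oplus\bbr^{k+1}$ by applying the decomposition (\ref{hfohhh2})--(\ref{hfohhh3}) in the ambient space $\bbe^{k+1,1}$, with respect to the splitting $\bbr e_{n-k}\oplus\bbe^{k,1}$. Here the ``spherical'' factor degenerates to $S^0=\{\pm e_{n-k}\}$, so every $\eta\in\bbh^{k+1}$ is written as $\eta=\e\, e_{n-k}\sinh r' +u\cosh r'$ with $\e\in\{\pm1\}$, $u\in\bbh^k$, $0\le r'<\infty$, and the measure becomes $\cosh^k r'\,dr'\,du$ summed over $\e=\pm1$ (the exponent of $\sinh$ drops to zero, since $(k+1)-k-1=0$).

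Next I would apply $\tilde\gam_v$. Because $\tilde\gam_v$ carries $e_{n-k}$ to $v$ and fixes $\bbr^{k+1}$ pointwise, one gets $\tilde\gam_v\eta=\e\, v\sinh r' +u\cosh r'$, so $f_v(\eta)=f(\e\, v\sinh r'+u\cosh r')$. Substituting this into the left-hand side and integrating $v$ over $S^{n-k-1}$, the change $v\mapsto -v$ (an isometry of the sphere, hence measure-preserving) identifies the $\e=-1$ summand with the $\e=+1$ summand; this is the source of the factor $2$. Thus the left-hand side equals
\[
2\intl_0^\infty\cosh^k r'\,dr'\intl_{S^{n-k-1}}dv\intl_{\bbh^k} f(v\sinh r'+u\cosh r')\,du .
\]
Finally I would expand the right-hand side via (\ref{hfohhh4}). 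Writing $x=v\sinh r+u\cosh r$, the orthogonal projection of $x$ onto $\bbr^{n-k}$ is $v\sinh r$, since $u\in\bbr^{k+1}\perp\bbr^{n-k}$; hence $x'=v\sinh r$ and $|x'|=\sinh r$ (using $|v|=1$, $r\ge 0$). Therefore the weight $|x'|^{-(n-k-1)}$ cancels exactly the factor $\sinh^{n-k-1}r$ in $d\nu(r)=\sinh^{n-k-1}r\,\cosh^k r\,dr$, leaving $\cosh^k r\,dr$, and the right-hand side reduces to the same radial/spherical integral displayed above. The only delicate points are the degenerate $S^0$ factor together with the correct bookkeeping of the factor $2$, and the elementary projection identity $|x'|=\sinh r$; the interchanges of integration are all controlled by the stated finiteness hypothesis through Tonelli's theorem.
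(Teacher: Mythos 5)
Your proposal is correct and follows essentially the same route as the paper: decompose $\bbh^{k+1}$ radially over $\bbh^k$ (the paper writes this as the integral over $r\in(-\infty,\infty)$ in (\ref{hfoeehhh4}), which is the same as your $S^0$-degenerate version of (\ref{hfohhh4})), push forward by $\tilde \gam_v$, use the $v\mapsto -v$ symmetry of $S^{n-k-1}$ to produce the factor $2$, and match against (\ref{hfohhh4}) via $|x'|=\sinh r$. Your explicit appeal to Tonelli's theorem makes the measure-theoretic hypothesis do visible work, but the substance is identical.
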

\begin{proof}
 Let $\eta=\eta_{n-k} \, e_{n-k}+\tilde \eta$, $\tilde \eta=(\eta_{n-k+1}, \ldots \eta_{n+1})$. Then $\tilde \gam_v \eta=v\, \eta_{n-k}+\tilde \eta$ and (\ref {hfoeehhh4}) yields
\bea l.h.s &=& \intl_{S^{n-k-1}} dv \intl_{\bbh^{k+1}} f(v\, \eta_{n-k}+\tilde \eta)\,d\eta\nonumber\\
 &=&\intl_{S^{n-k-1}} dv \intl_{-\infty}^\infty \cosh^k r\, dr   \intl_{\bbh^{k}}  f(v\,\sinh\, r +u\,\cosh\, r)\,du\nonumber\\
 &=&2\intl_0^\infty \frac{d\nu (r)}{\sinh^{n-k-1} r} \intl_{S^{n-k-1}} dv    \intl_{\bbh^{k}}  f(v\,\sinh\, r +u\,\cosh\, r)\,du,\nonumber\eea
$d\nu(r)=\sinh^{n-k-1}\, r\,\cosh^k\, r \, dr$. By (\ref{hfohhh4}), the result follows.
\end{proof}

 \begin{theorem} Let   $1\le k\le n-1$. The integral (\ref{hfos4609bu4}) is
 finite for almost all $(v,w)\in \tilde \bbh^n$ provided that
\be\label{hyptag31f}
\intl_{\bbh^{n}}|f(x)|^p \,\frac{dx}{|x'|^{n-k-1}} <\infty, \qquad 1 \le p < k/ (k-1).\ee
 \end{theorem}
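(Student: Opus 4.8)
The plan is to reduce the statement to the existence theorem for the ordinary hyperbolic Radon transform on $\bbh^{k+1}$ (Theorem \ref{hyptag31th}) by means of the slicing identity (\ref{hoyyyru9}), and then to convert the resulting condition on the slices $f_v$ into the stated $L^p$ bound on $f$ using the preceding Lemma together with H\"older's inequality.

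First I would use (\ref{hoyyyru9}), namely $(\tilde \H_k f)(v,\tilde \gam_v \z)=(\H f_v)(\z)$ with $f_v(\eta)=f(\tilde \gam_v \eta)$, where $\H$ is the usual hyperbolic Radon transform on $\bbh^{k+1}$. Since $\tilde \gam_v\in SO(n)\subset SO_0(n,1)$ is measure preserving, for each fixed $v$ the integral $(\tilde \H_k f)(v,w)$ is finite for almost all $w\in\overset {*}{\bbh}{}^{k+1}_v$ if and only if $(\H f_v)(\z)$ is finite for almost all $\z\in\overset {*}{\bbh}{}^{k+1}$. Applying Theorem \ref{hyptag31th} on $\bbh^{k+1}$ (with $n$ replaced by $k+1$, so that the timelike coordinate is again $\eta_{n+1}$), the latter holds provided
\be\label{plan-slice}
\intl_{\bbh^{k+1}}|f_v(\eta)|\,\frac{d\eta}{\eta_{n+1}}<\infty.
\ee
It therefore suffices to show that, under the hypothesis of the theorem, (\ref{plan-slice}) holds for almost all $v\in S^{n-k-1}$; the conclusion on $\tilde\bbh_n$ then follows by Fubini.

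Next I would integrate (\ref{plan-slice}) in $v$ and invoke the Lemma. Because $\tilde \gam_v$ fixes $e_{n+1}$, we have $(\tilde \gam_v \eta)_{n+1}=\eta_{n+1}$, so setting $g(x)=|f(x)|/x_{n+1}$ gives $g_v(\eta)=|f_v(\eta)|/\eta_{n+1}$, and the Lemma (formula (\ref{hoyiii})) applied to $g$ yields
\be\label{plan-lemma}
\intl_{S^{n-k-1}}\!\!dv\intl_{\bbh^{k+1}}|f_v(\eta)|\,\frac{d\eta}{\eta_{n+1}}=2\intl_{\bbh^n}\frac{|f(x)|}{x_{n+1}\,|x'|^{n-k-1}}\,dx.
\ee
Thus everything reduces to bounding the right-hand side. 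For $p=1$ this is immediate, since $x_{n+1}\ge 1$ on $\bbh^n$ makes the right-hand side dominated by $\intl_{\bbh^n}|f|\,|x'|^{-(n-k-1)}dx$. For $1<p<k/(k-1)$, H\"older's inequality with exponents $p,p'$, splitting the weight as $|x'|^{-(n-k-1)/p}\cdot|x'|^{-(n-k-1)/p'}$, gives
\be\label{plan-holder}
\intl_{\bbh^n}\frac{|f(x)|}{x_{n+1}\,|x'|^{n-k-1}}\,dx\le\left(\intl_{\bbh^n}\frac{|f(x)|^p}{|x'|^{n-k-1}}\,dx\right)^{1/p}\left(\intl_{\bbh^n}\frac{dx}{x_{n+1}^{p'}\,|x'|^{n-k-1}}\right)^{1/p'},
\ee
whose first factor is finite by hypothesis.

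The main computation — and the step that pins down the exponent range — is the convergence of the purely geometric integral $\intl_{\bbh^n} x_{n+1}^{-p'}|x'|^{-(n-k-1)}\,dx$. Using the coordinates (\ref{hfohhh2})--(\ref{hfohhh3}), $x=v\sinh r+u\cosh r$ with $v\in S^{n-k-1}$, $u\in\bbh^k$, and $dx=\sinh^{n-k-1}r\,\cosh^k r\,dr\,dv\,du$, one reads off $|x'|=\sinh r$ and $x_{n+1}=u_{n+1}\cosh r$. The integral then factors, the powers of $\sinh r$ cancel, and the $r$-part reduces to $\intl_0^\infty \cosh^{k-p'}r\,dr$, which converges if and only if $p'>k$, that is, precisely when $p<k/(k-1)$; the $u$-part $\intl_{\bbh^k} u_{n+1}^{-p'}\,du$ converges whenever $p'>k-1$, which then holds automatically. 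I expect this convergence bookkeeping to be the only genuine obstacle, and it is exactly what forces the sharp threshold $k/(k-1)$. Combining (\ref{plan-slice})--(\ref{plan-holder}) with this estimate completes the proof.
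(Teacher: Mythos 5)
Your argument is correct, and its skeleton is the one the paper uses: slice along the hyperboloids $\bbh^{k+1}_v$ via (\ref{hoyyyru9}), invoke Theorem \ref{hyptag31th} on $\bbh^{k+1}$ for each slice $f_v$, and glue the slices together with Lemma 4.4 (formula (\ref{hoyiii})). Where you genuinely diverge is in \emph{which} hypothesis of Theorem \ref{hyptag31th} you verify. The paper applies (\ref{hoyiii}) directly to $|f|^p$, concludes that $f_v\in L^p(\bbh^{k+1})$ for almost every $v$, and then simply quotes the $L^p$ part of Theorem \ref{hyptag31th} with $n$ replaced by $k+1$ -- which already carries the sharp range $1\le p<k/(k-1)$ -- so the whole proof is three lines. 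You instead target the weaker weighted-$L^1$ hypothesis $\intl_{\bbh^{k+1}}|f_v(\eta)|\,\eta_{n+1}^{-1}\,d\eta<\infty$, apply the Lemma to $|f|/x_{n+1}$ (legitimate, since $\tilde\gam_v$ fixes $e_{n+1}$, so $g_v=|f_v|/\eta_{n+1}$), and then run H\"older together with an explicit evaluation of $\intl_{\bbh^n}x_{n+1}^{-p'}|x'|^{-(n-k-1)}\,dx$ in the bi-hyperbolic coordinates (\ref{hfohhh2})--(\ref{hfohhh3}); your bookkeeping there ($|x'|=\sinh r$, $x_{n+1}=u_{n+1}\cosh r$, cancellation of the $\sinh^{n-k-1}r$ factors, convergence of $\intl_0^\infty\cosh^{k-p'}r\,dr$ iff $p'>k$) is accurate, as is the separate treatment of $p=1$. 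In effect you re-derive, inside the product geometry of $\bbh^n$, the H\"older step that the paper leaves buried in its citation of \cite{BR99a}. What your route buys is transparency -- one sees exactly why the threshold is $k/(k-1)$ rather than $(n-1)/(k-1)$ -- and a formally stronger statement, since finiteness of the single mixed-weight integral $\intl_{\bbh^n}|f(x)|\,x_{n+1}^{-1}|x'|^{-(n-k-1)}\,dx$ already suffices; what it costs is length and a computation that duplicates work already done in the cited existence theorem.
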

\begin{proof} If $f$ satisfies (\ref{hyptag31f}), then, by (\ref{hoyiii}), $f_v\in L^p (\bbh^{k+1})$. Hence, by Theorem  \ref{hyptag31th}  and (\ref{hoyyyru9}),
$(\tilde \H_k f)(v,\tilde \gam_v  \z)=(\H f_v)(\z)$ is finite for almost all $v\in S^{n-k-1}$ and $\z\in \overset {*}{\bbh}{}^{k+1}$. It follows that
 $(\tilde \H_k f)(v,w)$ is finite for almost all $v\in S^{n-k-1}$ and $w\in \overset {*}{\bbh}{}^{k+1}_v$.
\end{proof}
\begin{remark} {\rm  The restriction  $1 \le p < k/ (k-1)$ is sharp, as in Theorem  \ref{hyptag31th}, and the bound  $k/ (k-1)$ is smaller than $(n-1)/ (k-1)$ if $k<n-1$; cf. (\ref{hfohhh5fr}).}
\end{remark}

\subsection{Inversion formulas} To reconstruct  an arbitrary  function $f$ satisfying (\ref{hyptag31f}) from
$\vp(v,w)= (\tilde \H_k f)(v,w)$, it suffices to invert the usual hyperbolic Radon transform $(\H f_v)(\z)$ from
(\ref{hoopmvyyy}). Specifically,  fix $v\in S^{n-k-1}$ and let $\vp_v(\z)=\vp(v,\tilde \gam_v \z)$. Using any known inversion formula for $\H$  (see, e.g., \cite{BR99a, He11,  Ru02a, Ru02c}), we get
\be\label {dddwbu71w} f_v(\eta)\equiv f (\tilde \gam_v \eta)=(\H^{-1} \vp_v)(\eta).\ee
Since $f_v\in L^p (\bbh^{k+1})$, $1 \le p < k/ (k-1)$, then it can be  evaluated at almost all points of almost all hyperboloids
$\bbh^{k+1}_v$. If, in addition to (\ref{hyptag31f}), $f$ is continuous, then,
to find the value of $f$ at a  point $x\in \bbh^n$, we regard $x$ as a column vector $x=(x_{1}, \ldots, x_{n+1})^T$ and  set
\[x'=(x_1, \ldots, x_{n-k})^T\in \bbr^{n-k},\qquad x''=(x_{n-k+1}, \ldots, x_{n+1})^T\in \bbr^{k+1},\]
\be\label {dddwbu71}  v=x'/|x'|\in S^{n-k-1}\subset\bbr^{n-k}, \ee
\be\label {dddwbu72} \eta=( 0, \ldots, 0, |x'|, x'')^T\in \bbh^{k+1}\subset \bbr^{k+2}.\ee
Then  $x=\tilde \gam_v \eta$ and we get
$f(x)= (\H^{-1} \vp_v)(\eta)$.

\section{The Range of the Restricted $k$-plane Transform}

\subsection{Definitions and the main result}

We will be using the same notation as in Section \ref {222222}. In the following $\bbz_+=\{0,1,2, \ldots\}$, $\bbz^n_+=\bbz_+ \times \cdots \times \bbz_+ $ ($n$ times).
 The  Schwartz space $S (\rn)$  is defined in a standard way with the topology generated by the sequence of norms
\[ ||f||_m= \sup\limits_{|\a|\le m} (1+|x|)^m |(\partial^\a f)(x)|, \qquad m=0,1,2, \ldots.\]
The Fourier transform of $f\in S (\rn)$ has the form \be \label{ft}(Ff)(y)
\equiv \hat f (y) = \intl_{\bbr^{ n}} f(x)\, e^{ i x \cdot y} \,dx.
\ee
The corresponding inverse Fourier transform  will be denoted by $\check f$.

\begin{definition} \label{iiuuyg5a} A function $g$ on the sphere $S^{k} \subset \bbr^{k+1}$ is called differentiable
if the homogeneous function $\tilde g(x)= g(x/|x|)$ is
differentiable in the usual sense on $\bbr^{k+1} \setminus \{0\}$. The
derivatives of  $g$  will be defined as
restrictions to $S^{k}$ of the corresponding derivatives of
$\tilde g(x)$:
\be\label{iiuuyg}
(\partial_\theta^\a g)(\theta)=(\partial^\a \tilde g)(x)\big|_
{x=\theta},\qquad \a \in \bbz_+^{k+1}, \quad \theta\in S^{k}.
\ee
\end{definition}

\begin{definition}
 We  denote by $S_e (\tilde Z_{n,k})$ the  space of
 functions $\vp (\th, s; x'')$ on $ \tilde Z_{n,k}= S^k \times \bbr \times \bbr^{n-k-1}$,  which are
infinitely differentiable in $\theta$, $s$ and $ x''$, rapidly decreasing
as $|s|+|x''| \to \infty$ together with all derivatives, and satisfy
\be \label {65679z90swu}\vp (-\th, -s; x'') = \vp (\th, s; x'') \qquad \forall \; (\th, s; x'')\in \tilde Z_{n,k}.\ee
 The topology in $S_e (\tilde Z_{n,k})$ is defined by the sequence of norms
\be\label{knnzwe35}
||\vp ||_m=\!\sup\limits_{|\mu|+j+|\gam|\le m} \,\sup\limits_{\theta,s, x''} \
\!(1\!+\!|s|\!+\!|x''|)^m |(\partial_\theta^\mu \partial^j_{s} \partial_{x''}^\gam \vp)(\th, s; x'')|. \ee
The space  $S_e (Z_{n})$  of rapidly decreasing even smooth
 functions $\tilde\vp (\th, s)$ on $Z_{n}= S^{n-1} \times \bbr$ is defined similarly.
 \end{definition}

 \begin{definition} \label{iiuuyg5a1kka2}
Let  $S_H (\tilde Z_{n,k})$ denote the subspace of all functions $\vp \in S_e (\tilde Z_{n,k})$ satisfying the  {\bf moment condition}:
 {\it  For every $m\in \bbz_+$ there exists a homogeneous polynomial
\[P_{m} (\th, x'')=\sum\limits_{|\a|=m} c_\a (x'') \,\th ^\a\]
with coefficients $c_\a (x'')$ in $S(\bbr^{n-k-1})$ such that}
\be\label {7wer34} \intl_{\bbr}  \vp (\th, s; x'')\, s^m\, ds =P_{m} (\th, x'').\ee
We equip $S_H (\tilde Z_{n,k})$ with the induced topology of $S_e (\tilde Z_{n,k})$.
\end{definition}

The main result of this section is the following
\begin{theorem} \label {657390sw}  The restricted $k$-plane transform  $\tilde R_k$
acts as an isomorphism from $S(\rn)$  onto $S_H (\tilde Z_{n,k})$.
\end{theorem}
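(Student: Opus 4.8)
The starting point is the factorization (\ref{kkmm4539a1bu}), which realizes $\tilde R_k$ as the classical hyperplane Radon transform $R$ on $\bbr^{k+1}$ acting in the $x'$-variable, with $x''\in\bbr^{n-k-1}$ a passive parameter. The whole proof amounts to upgrading Helgason's range theorem for $R$ on $S(\bbr^{k+1})$ (the classical hyperplane case, \cite{He11}) to joint control in the extra variable $x''$. The plan is to pass to the Fourier side. Write $f^\sharp(y',x'')=\intl_{\bbr^{k+1}}f(x',x'')\,e^{ix'\cdot y'}\,dx'$ for the Fourier transform of $f$ in $x'$ alone; then $f^\sharp\in S(\rn)$ whenever $f\in S(\rn)$. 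The projection--slice theorem for $R$ supplies the bridge
\be\label{fslice-plan}\intl_\bbr (\tilde R_k f)(\th,s;x'')\,e^{i\t s}\,ds=f^\sharp(\t\th,x''),\qquad (\th,\t)\in S^k\times\bbr.\ee
As $(\th,\t)$ runs over $S^k\times\bbr$ the point $\t\th$ covers $\bbr^{k+1}$ doubly, and the moment condition (\ref{7wer34}) will be seen to be exactly the compatibility that lets a datum prescribed on these rays extend smoothly across the origin.

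First I would check the inclusion $\tilde R_k\,S(\rn)\subset S_H(\tilde Z_{n,k})$. Smoothness of $\vp=\tilde R_k f$ in $(\th,s,x'')$ and the evenness (\ref{65679z90swu}) are read off from (\ref{kkmm4539a1}) by differentiating under the integral sign, derivatives in $\th$ being understood through Definition \ref{iiuuyg5a}; the seminorm bounds (\ref{knnzwe35}) come from the usual Schwartz estimates for $R$ in the $x'$-variable with all $x''$-derivatives carried inside the integral, the only new point over the classical case being the uniformity of the constants in $x''$, which the decay of $f$ provides. For the moment condition, differentiating (\ref{fslice-plan}) $m$ times in $\t$ at $\t=0$ gives
\be \intl_\bbr\vp(\th,s;x'')\,s^m\,ds=(-i)^m\,\partial_\t^m\big[f^\sharp(\t\th,x'')\big]_{\t=0}=\suml_{|\a|=m}\frac{(-i)^m\,m!}{\a!}\,(\partial_{y'}^\a f^\sharp)(0,x'')\,\th^\a,\ee
a homogeneous polynomial of degree $m$ in $\th$ whose coefficients $c_\a(x'')=(-i)^m(m!/\a!)(\partial_{y'}^\a f^\sharp)(0,x'')$ belong to $S(\bbr^{n-k-1})$ because $f^\sharp\in S(\rn)$. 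This is (\ref{7wer34}), and together with the seminorm bounds it shows $\tilde R_k$ maps $S(\rn)$ continuously into $S_H(\tilde Z_{n,k})$, with $||\tilde R_k f||_m\le c\,||f||_{m'}$ for a suitable $m'$.

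Injectivity is immediate: $\tilde R_k f=0$ forces $Rf_{x''}=0$, hence $f_{x''}=0$ by injectivity of $R$, for every $x''$. For surjectivity, given $\vp\in S_H(\tilde Z_{n,k})$ I set $\Phi(\th,\t;x'')=\intl_\bbr\vp(\th,s;x'')\,e^{i\t s}\,ds$ and define $\Psi(y',x'')=\Phi(y'/|y'|,|y'|;x'')$ on $(\bbr^{k+1}\setminus\{0\})\times\bbr^{n-k-1}$, the evenness (\ref{65679z90swu}) ensuring consistency with (\ref{fslice-plan}) across both signs of $\t$. The crucial step, and the \emph{main obstacle}, is to show that $\Psi$ extends to a function in $S(\rn)$. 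Smoothness of $\Psi$ across $y'=0$ is precisely where the moment condition is indispensable: since $\partial_\t^m\Phi(\th,\t;x'')|_{\t=0}=i^m\intl_\bbr\vp\,s^m\,ds=i^m P_m(\th,x'')$ is, by (\ref{7wer34}), the restriction to $S^k$ of a homogeneous polynomial of degree $m$ in $y'$, the polar datum $\Phi$ patches into a $C^\infty$ function of $y'=\t\th$. This is the point at which I would organize the Taylor-remainder estimates following the device of Carton-Lebrun \cite{CL} rather than Helgason's original argument, carrying the parameter $x''$ and its derivatives along so that everything remains uniform. Once smoothness at the origin is secured, joint rapid decay of $\Psi$ and its derivatives in $(y',x'')$ follows from (\ref{knnzwe35}) (smoothness of $\vp$ in $s$ yields decay of $\Phi$ in $\t$, decay of $\vp$ in $x''$ yields decay in $x''$). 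Taking $f$ to be the inverse Fourier transform of $\Psi$ in $y'$ then produces $f\in S(\rn)$, and reading (\ref{fslice-plan}) backwards gives $\tilde R_k f=\vp$.

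The two families of seminorm estimates assembled above, namely $||\tilde R_k f||_m\le c\,||f||_{m'}$ from the forward step and $||f||_m\le c\,||\vp||_{m'}$ from the inverse construction, show that $\tilde R_k$ and $\tilde R_k^{-1}$ are both continuous. As $S(\rn)$ and $S_H(\tilde Z_{n,k})$ are Fr\'echet spaces, a continuous bijection with continuous inverse is a topological isomorphism, which is the assertion. The only place demanding genuine care is the removal of the singularity of $\Psi$ at $y'=0$ together with the uniformity of all estimates in the parameter $x''$; everything else is a parametrized version of the classical one-variable Radon theory.
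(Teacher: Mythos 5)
Your proposal is correct and follows essentially the same route as the paper: the forward inclusion with the moment condition read off from the projection--slice identity, and the inverse constructed by assembling the partial Fourier transform of $\vp$ in polar coordinates $y'=\t\th$ and showing (via Taylor-remainder estimates in the spirit of Carton-Lebrun \cite{CL}, with the moment condition curing the singularity at $y'=0$ and all bounds uniform in $x''$) that the resulting function lies in $S(\rn)$ before inverting the Fourier transform in $y'$. The step you flag as the main obstacle is exactly the paper's Proposition \ref{mnubhrkk}, carried out there by splitting $e^{is|x'|}$ into a Taylor polynomial plus remainder $e_q$ and estimating the two pieces separately.
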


\subsection{Auxiliary statements}

 \begin{lemma}${}$\hfill

 {\rm (i)} If $f\in C^k (\bbr^n)$, $t\in \bbr$, then for $|\a| \le k$ and  $j\le k$,
 \be\label{aqqc}   \partial^\a_x  [f (tx/|x|)]=|x|^{-|\a|}\sum\limits_{|\gam |=1}^{|\a|} t^{|\gam|} \,h_{\a,\gam} (x/|x|)\, (\partial^\gam f)(tx/|x|),\ee
\be\label{aqqc1} \frac{\partial^j}{\partial t^j}\, [f (tx/|x|)]=\sum\limits_{|\gam|=j}  h_{\gam} (x/|x|)\, (\partial^\gam f)(tx/|x|),
\ee
\noindent where $h_{\a,\gam}$ and $ h_{\gam}$  are homogeneous polynomials independent of $f$.

{\rm (ii)}  If $g\in C^k (\bbr_+)$, $\bbr_+=(0,\infty)$, then for $1\le |\b| \le k$ and  $x\neq 0$,
\be\label{aqqc1hr}  \partial^\b_x  [g (|x|)]  =\sum\limits_{k=1}^{|\b|} |x|^{k-|\b|} \,h_{\b,k} (x/|x|)\, g^{(k)} (|x|),\ee
\noindent where $h_{\b,k}$   are homogeneous polynomials  independent of $g$.
\end {lemma}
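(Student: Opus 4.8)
The plan is to prove all three identities by the multivariate chain rule combined with an induction on the order of the differential operator, the whole argument reducing to two elementary first-order computations together with a careful bookkeeping of homogeneity degrees in $x$. I would begin with the $t$-derivative formula (\ref{aqqc1}), which is the quickest. Fixing $x\neq 0$ and writing $\om=x/|x|$, the map $t\mapsto f(t\om)$ satisfies $\tfrac{\partial}{\partial t}f(t\om)=\sum_i \om_i(\partial_i f)(t\om)$, i.e.\ it is the directional derivative in the fixed direction $\om$. Iterating $j$ times and applying the multinomial theorem to the operator $(\om\cdot\nabla)^j$ gives $\tfrac{\partial^j}{\partial t^j}f(t\om)=\sum_{|\gam|=j}\binom{j}{\gam}\om^\gam(\partial^\gam f)(t\om)$, so (\ref{aqqc1}) holds with $h_\gam(\om)=\binom{j}{\gam}\om^\gam$, which are honest homogeneous polynomials of degree $j$ in $\om$ and manifestly independent of $f$.

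For (\ref{aqqc}) and (\ref{aqqc1hr}) the engine is induction on $|\a|$ (resp.\ $|\b|$), fueled by the two base computations
\[
\partial_{x_i}\Bigl(\frac{x_j}{|x|}\Bigr)=\frac{1}{|x|}\bigl(\delta_{ij}-\om_i\om_j\bigr),\qquad \partial_{x_i}|x|=\om_i,
\]
in which the coefficients $\delta_{ij}-\om_i\om_j$ and $\om_i$ are polynomials in $\om$. The crucial algebraic observation is that the class of functions of the form $|x|^{-m}p(\om)$, with $p$ a polynomial in $\om$, is closed under each $\partial_{x_i}$ at the cost of raising $m$ by one: indeed $\partial_{x_i}[p(\om)]=|x|^{-1}\sum_j(\partial_j p)(\om)(\delta_{ij}-\om_i\om_j)$ is again $|x|^{-1}$ times a polynomial in $\om$, and $\partial_{x_i}(|x|^{-m})=-m\,\om_i\,|x|^{-m-1}$.

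For (\ref{aqqc}) I would assume inductively that $\partial^\a_x[f(t\om)]$ is a sum of terms $|x|^{-|\a|}\,t^{|\gam|}\,h(\om)\,(\partial^\gam f)(t\om)$ with $1\le|\gam|\le|\a|$ and $h$ polynomial in $\om$, the hypothesis $f\in C^k$ guaranteeing these $f$-derivatives exist for $|\a|\le k$. Applying one more $\partial_{x_i}$ strikes each such term in exactly three places: the power $|x|^{-|\a|}$, the coefficient $h(\om)$, and the factor $(\partial^\gam f)(t\om)$, which by the chain rule produces $t\,|x|^{-1}\sum_j(\delta_{ij}-\om_i\om_j)(\partial^{\gam+e_j}f)(t\om)$. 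By the base computation and the closure observation, in all three cases the exponent of $1/|x|$ rises by one, giving $|x|^{-|\a|-1}$; only the third case touches $f$, and it simultaneously raises $|\gam|$ by one and supplies one more factor of $t$, so the balance $t^{|\gam|}$ with $1\le|\gam|\le|\a|+1$ is exactly preserved and the coefficients, being built solely from the inner map $x\mapsto tx/|x|$, do not involve $f$. The proof of (\ref{aqqc1hr}) is identical in spirit: from the base case $\partial_{x_i}[g(|x|)]=\om_i\,g'(|x|)$ one differentiates a generic term $|x|^{m-|\b|}h(\om)g^{(m)}(|x|)$; the three contributions each lower the exponent of $|x|$ by one to $m-(|\b|+1)$, while only the derivative of $g^{(m)}(|x|)=\om_i\,g^{(m+1)}(|x|)$ raises the order of the $g$-derivative from $m$ to $m+1$, keeping the invariant ``exponent of $|x|$ equals (order of $g$-derivative)$\,-\,|\b|$'' together with the range $1\le m\le|\b|$.

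Finally, to justify the word \emph{homogeneous}, I would normalize on the sphere: since $|\om|^2=\sum_i\om_i^2=1$ for $\om\in S^{n-1}$, any polynomial in $\om$ may be multiplied by suitable powers of $|\om|^2$ to equalize its monomial degrees without altering its values, so each $h_{\a,\gam}$, $h_\gam$, $h_{\b,k}$ agrees on the sphere with a genuine homogeneous polynomial. The only delicate point is the bookkeeping---verifying that the exponent of $|x|$ tracks $|\a|$ (resp.\ that the power of $t$ tracks $|\gam|$) exactly---but this is forced term by term, since every application of $\partial_{x_i}$ changes the homogeneity degree in $x$ by precisely $-1$; I expect no genuine obstacle beyond organizing this degree count cleanly.
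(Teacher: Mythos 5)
Your proposal is correct and follows essentially the same route as the paper's proof: the identity (\ref{aqqc1}) by iterating the directional derivative $(\om\cdot\nabla)^j$, and (\ref{aqqc}) and (\ref{aqqc1hr}) by induction on $|\a|$ (resp.\ $|\b|$) starting from the first-order computation $\partial_{x_i}(x_j/|x|)=|x|^{-1}(\delta_{ij}-\om_i\om_j)$, which is exactly the paper's base case. Your version merely fills in what the paper dismisses as ``routine calculation'' --- the explicit degree bookkeeping and the normalization by powers of $|\om|^2$ on the sphere to make the coefficient polynomials genuinely homogeneous --- and both additions are sound.
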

\begin{proof}  We proceed by induction. Let $|\a|=1$, that is, $\partial^\a_x =\partial/\partial x_j$ for some $j \in \{1, 2,\ldots , n\}$. Then
\[\frac{\partial}{\partial x_j}\, [f (tx/|x|)]=t\sum\limits_{k=1}^n  (\partial_k f)(tx/|x|)\,  p_{j,k}(x),\]

\[ p_{j,k}(x)=\frac{\partial}{\partial x_j} \left [\frac{x_k}{|x|}\right ]=\frac{1}{|x|}\left \{\begin{array} {ll}
\displaystyle{-\frac{x_k x_j}{|x|^2}}  & \mbox{if $ j\neq k$,}\\
\displaystyle{1-\frac{x_k^2}{|x|^2}}  & \mbox{if $ j= k$.}\\
\end{array}
\right.\]
This gives (\ref{aqqc})  for  $|\a|=1$. Now the routine calculation shows that if (\ref{aqqc}) holds  for any  $|\a|=\ell$, then it is true for $|\a|=\ell +1$.

The proof of (\ref{aqqc1}) is easier. For $j=1$,
 \[\frac{\partial}{\partial t}\, [f (tx/|x|)]=\sum\limits_{k=1}^n  (\partial_k f)(tx/|x|)\,\frac{x_k}{|x|}.\] The general case follows by iteration.  The proof of (\ref{aqqc1hr}) is straightforward by induction.
\end{proof}

\begin{corollary}\label {iokl} Let $f\in S(\bbr^n)$, $\tilde f(\theta, t)= f(t\theta)$, where $t\in \bbr$, $\theta \in S^{n-1}$. Then for any $m\in \bbz_+$  there exist  $N\in \bbz_+$ and a constant $ c_{m,N}$ independent of $f$ such that
\bea \label{jiko} ||\tilde f||_m &\equiv& \sup\limits_{|\a|+j\le m} \sup\limits_{\theta,t}
|(1+|t|)^m |(\partial_\theta^\a \partial^j_t \tilde f)(\theta,t)|\nonumber\\
&\le& c_{m,N} ||f||_N  \equiv c_{m,N}\,\sup\limits_{|\gam|\le N} \sup\limits_{y} \
\!(1+|y|)^N |(\partial^\gam f)(y)|.\nonumber\eea
In other words,  $f \to \tilde f$ is a continuous mapping from $S(\bbr^n)$ to $S_e(Z_n)$.
\end{corollary}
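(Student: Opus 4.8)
The plan is to reduce the whole estimate to the two differentiation formulas (\ref{aqqc}) and (\ref{aqqc1}) of the preceding Lemma, applied to the homogeneous extension $F(x,t)=f(tx/|x|)$, and then to absorb every power of $t$ that arises into the rapid decay of $f$. First I would note that $F$ is smooth on $(\bbr^n\setminus\{0\})\times\bbr$, and that by Definition \ref{iiuuyg5a} (with $S^{n-1}\subset\bbr^n$ in place of $S^k$) the spherical derivative $\partial_\th^\a$ acts on $\tilde f(\cdot,t)$ precisely as $(\partial_\th^\a\tilde f)(\th,t)=(\partial^\a_x F)(x,t)\big|_{x=\th}$, because the homogeneous extension of $\th\mapsto f(t\th)$ is exactly $F(\cdot,t)$. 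Since mixed partials of the smooth function $F$ commute, $(\partial_\th^\a\partial_t^j\tilde f)(\th,t)=(\partial_t^j\partial_x^\a F)(x,t)\big|_{x=\th}$, which I will evaluate by differentiating in $x$ first.

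Applying (\ref{aqqc}) rewrites $\partial_x^\a F$ as a finite sum of terms $|x|^{-|\a|}t^{|\gam|}h_{\a,\gam}(x/|x|)\,(\partial^\gam f)(tx/|x|)$ with $1\le|\gam|\le|\a|$ (or, when $|\a|=0$, the single term $F$ itself). Applying $\partial_t^j$ to such a term and using the Leibniz rule together with (\ref{aqqc1}) for the factor $(\partial^\gam f)(tx/|x|)$ produces a further finite sum in which each $t$-differentiation either lowers the power of $t$ or raises the order of the derivative of $f$. Setting $|x|=1$ I obtain a representation
\[
(\partial_\th^\a\partial_t^j\tilde f)(\th,t)=\suml_\nu t^{\ell_\nu}\,q_\nu(\th)\,(\partial^{\del_\nu} f)(t\th),
\]
a finite sum in which $0\le\ell_\nu\le|\a|$, each $q_\nu$ is a homogeneous polynomial on $S^{n-1}$ (hence bounded there), and $|\del_\nu|\le|\a|+j$; crucially the number of terms and the polynomials $q_\nu$ depend only on $m$ and $n$, not on $f$.

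Finally I would estimate each term pointwise. Since $|\th|=1$ gives $|t\th|=|t|$, the Schwartz bound yields $|(\partial^{\del_\nu} f)(t\th)|\le(1+|t|)^{-N}\,\|f\|_N$ whenever $|\del_\nu|\le N$. Choosing $N=2m$, and using $\ell_\nu\le|\a|\le m$ so that $(1+|t|)^m|t|^{\ell_\nu}\le(1+|t|)^{2m}=(1+|t|)^N$, the weight $(1+|t|)^m$ in the definition of $\|\tilde f\|_m$ is completely absorbed, and each term is bounded by $\const\cdot\|f\|_N$ uniformly in $\th$ and $t$. Summing the finitely many terms yields $\|\tilde f\|_m\le c_{m,N}\|f\|_N$ with $c_{m,N}$ independent of $f$, which is exactly the asserted continuity; the evenness $\tilde f(-\th,-t)=f((-t)(-\th))=f(t\th)=\tilde f(\th,t)$ and the smoothness needed for membership in $S_e(Z_n)$ are immediate.

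The step demanding the most care is the bookkeeping in the combined differentiation: one must check that differentiating in $t$ never raises the power of $t$ above $|\a|$, while it raises the derivative order $|\del_\nu|$ only up to $|\a|+j\le m$. This is what guarantees that the single choice $N=2m$ simultaneously dominates both the surviving factor $|t|^{\ell_\nu}$ together with $(1+|t|)^m$, and the decay required of $\partial^{\del_\nu}f$; everything else is a routine counting of terms whose number is controlled by $m$ and $n$ alone.
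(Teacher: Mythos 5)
Your argument is correct and is exactly the derivation the paper intends: the Corollary is stated without a separate proof precisely because it follows from formulas (\ref{aqqc}) and (\ref{aqqc1}) of the preceding Lemma in the way you describe, expressing $\partial_\theta^\a\partial_t^j\tilde f$ as a finite sum of terms $t^{\ell_\nu}q_\nu(\theta)(\partial^{\del_\nu}f)(t\theta)$ with $\ell_\nu\le|\a|$ and $|\del_\nu|\le|\a|+j$, and then absorbing the weights into the Schwartz decay of $f$ (e.g.\ with $N=2m$). Your bookkeeping of the powers of $t$ and of the derivative orders is accurate, so nothing further is needed.
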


\begin{corollary}\label {iokl2} The map $F_1$, which assigns to a  function $w(\theta, t) \in S_e(Z_n)$ its Fourier transform in the $t$-variable, is an automorphism of the space $S_e(Z_n)$.
\end{corollary}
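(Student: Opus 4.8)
The plan is to write $F_1$ explicitly, following the sign convention of (\ref{ft}), as
\[
 (F_1 w)(\th, \t)=\intl_\bbr w(\th, t)\, e^{it\t}\, dt,
\]
and to reduce everything to the classical fact that the one-dimensional Fourier transform is an automorphism of $S(\bbr)$, carrying out all estimates uniformly in $\th\in S^{n-1}$. The first step is to record the two commutation relations that drive the argument. Differentiation in $\t$ may be passed under the integral (justified below), giving $\part_\t^j (F_1 w)=i^j F_1(t^j w)$; integration by parts in $t$ gives $\t^a (F_1 w)=i^a F_1(\part_t^a w)$. For the spherical variable, the derivative $\part_\th^\mu$ of Definition \ref{iiuuyg5a} is defined through the homogeneous extension $\tilde w(x,t)=w(x/|x|,t)$, which acts only on the $\th$-dependence and hence commutes with the $t$-integration; thus $\part_\th^\mu(F_1 w)=F_1(\part_\th^\mu w)$. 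Combining these yields $\part_\th^\mu\part_\t^j(F_1 w)=i^j F_1\big(t^j\,\part_\th^\mu w\big)$.

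Next I would prove the continuity estimate $||F_1 w||_m\le c_m\,||w||_N$ for a suitable $N$. Using $(1+|\t|)^m\le C_m\suml_{a=0}^m |\t|^a$ together with the relation $|\t|^a|F_1 g|=|F_1(\part_t^a g)|\le \intl_\bbr |(\part_t^a g)(\th,t)|\,dt$ applied to $g=t^j\part_\th^\mu w$, the quantity $(1+|\t|)^m|\part_\th^\mu\part_\t^j (F_1 w)(\th,\t)|$ is dominated by a finite sum of integrals $\intl_\bbr |\part_t^a(t^j\part_\th^\mu w)|\,dt$ with $a\le m$. Expanding by the Leibniz rule and inserting the integrable weight $(1+|t|)^{-2}$ converts each integral into a supremum of the form $\sup_{\th,t}(1+|t|)^{j+2}|\part_t^{a-b}\part_\th^\mu w(\th,t)|$, which is bounded by $||w||_N$ with $N=2m+2$. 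This shows that $F_1$ maps $S_e(Z_n)$ continuously into the space of smooth functions that are rapidly decreasing in the second variable together with all derivatives. The evenness condition (\ref{65679z90swu}) is preserved because the substitution $t\mapsto -t$ combined with $w(-\th,-t)=w(\th,t)$ gives $(F_1 w)(-\th,-\t)=(F_1 w)(\th,\t)$. Hence $F_1$ is a continuous linear map of $S_e(Z_n)$ into itself.

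To finish, I would exhibit the inverse. Define $(F_1^{-1}W)(\th,t)=(2\pi)^{-1}\intl_\bbr W(\th,\t)\,e^{-it\t}\,d\t$; the argument of the previous two paragraphs applies verbatim (only the constants and one sign change), so $F_1^{-1}$ is likewise a continuous linear map of $S_e(Z_n)$ into itself. For each fixed $\th$ the functions $t\mapsto w(\th,t)$ and $\t\mapsto W(\th,\t)$ are ordinary Schwartz functions on $\bbr$, so the classical one-variable Fourier inversion theorem gives $F_1^{-1}F_1=F_1 F_1^{-1}=\mathrm{id}$ pointwise in $\th$. Therefore $F_1$ is a bijection of $S_e(Z_n)$ with continuous inverse, i.e. a topological automorphism, as claimed.

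The routine technical points, which I expect to be the only real work, are the justification of differentiation under the integral sign in both $\t$ and $\th$ and the correct handling of the nonstandard spherical derivative. Differentiation in $\t$ is legitimate because $t^j w(\th,t)$ is integrable in $t$ uniformly in $\th$; differentiation in $\th$ is legitimate because, on the compact manifold $S^{n-1}$, the homogeneous extension $\tilde w$ and all its $x$-derivatives are continuous and rapidly decreasing in $t$, so dominated convergence applies and the $t$-Fourier transform of $\tilde w$ is exactly the homogeneous extension of $F_1 w$. The compactness of $S^{n-1}$ is precisely what makes all the $\th$-estimates automatically uniform, so no genuine analytic obstacle arises beyond this bookkeeping.
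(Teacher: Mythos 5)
Your proof is correct and follows exactly the route the paper intends: the paper states this corollary without proof, treating it as the standard fact that the one-dimensional Fourier transform is a topological automorphism of Schwartz space, carried out uniformly in the spherical parameter $\theta$ via the homogeneous-extension derivatives of Definition \ref{iiuuyg5a}. Your commutation relations, the norm estimate $\|F_1 w\|_m \le c_m \|w\|_{2m+2}$, the preservation of the evenness condition (\ref{65679z90swu}), and the explicit inverse supply precisely the bookkeeping the paper leaves implicit, with no gaps.
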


\subsection{Proof of Theorem \ref{657390sw}}

We split the  proof  in several steps.

\begin{proposition} \label {657390sw1}   If $f \in S(\rn)$, then $\tilde R_k f \in S_H (\tilde Z_{n,k})$ and the map $f \to \tilde R_k f$  is continuous.
\end{proposition}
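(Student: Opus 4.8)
The plan is to exploit the factorization $(\tilde R_k f)(\th,s;x'') = (R f_{x''})(\th,s)$ established in \eqref{kkmm4539a1bu}, so that $\tilde R_k$ is, in the $x'$-variables, literally the classical hyperplane Radon transform $R$ on $\bbr^{k+1}$ depending on the parameter $x''$. The target space $S_H(\tilde Z_{n,k})$ has three defining features, and I would verify each in turn: (a) that $\tilde R_k f$ is smooth and rapidly decreasing in $(s,x'')$ together with all derivatives; (b) the evenness condition \eqref{65679z90swu}; and (c) the moment condition \eqref{7wer34} with polynomial coefficients $c_\a(x'')\in S(\bbr^{n-k-1})$. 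The evenness (b) is immediate from $\t(\th,s;x'')=\t(-\th,-s;x'')$. The whole proposition is really a quantitative ``$\tilde R_k$ maps Schwartz to Schwartz with moment conditions,'' uniformly in the extra parameter $x''$, so the core is to control $x''$-derivatives on equal footing with the $s$- and $\th$-derivatives.

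First I would reduce the smoothness/decay estimates to the known one-dimensional picture via the Fourier slice theorem. Writing $\widehat{\vp_{x''}}(\th,q)$ for the Fourier transform of $(R f_{x''})(\th,\cdot)$ in $s$, the projection-slice identity gives $\widehat{\vp_{x''}}(\th,q)=\hat f(q\th,\cdot)\big|_{x'=q\th}$, i.e.\ it is the full Fourier transform $\hat f$ of $f$ in the $x'$-variables evaluated on the ray $q\th$, still a function of $x''$ (or its Fourier-dual, depending on whether one also transforms in $x''$). Since $f\in S(\rn)$, its Fourier transform $\hat f$ is Schwartz in all $n$ variables; restricting to $\{x'=q\th\}$ and using Corollary \ref{iokl} (applied in the $\bbr^{k+1}$-slice, with $x''$ as passive parameters) shows that $(\th,q)\mapsto \widehat{\vp_{x''}}(\th,q)$ is smooth, even, and rapidly decreasing together with all $\th,q,x''$ derivatives, with seminorm bounds of the form $\|\cdot\|_m \le c_{m,N}\|f\|_N$. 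Transforming back in $q$ (Corollary \ref{iokl2}, which says the $t$-Fourier transform is an automorphism of the relevant even-Schwartz space) returns $\vp_{x''}(\th,s)$ in $S_e$. The only subtlety at $q=0$ is the classical one: $\widehat{\vp_{x''}}(\th,q)$ is a smooth function on $\bbr^{k+1}$ pulled back to $S^k\times\bbr$, so smoothness across $q=0$ requires the evenness plus the homogeneity structure already packaged in Corollary \ref{iokl}. Throughout, $x''$ is carried along as a parameter, and differentiating \eqref{kkmm4539a1bu} in $x''$ simply differentiates $f$ in its last block of variables, so no new phenomenon arises; the seminorm estimates acquire $x''$-derivatives of $f$, all of which are again Schwartz.

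The genuinely content-bearing step is the moment condition (c), and this is where I expect the main obstacle. Computing $\int_\bbr \vp(\th,s;x'')\,s^m\,ds$ and using the slice theorem turns it into a $q$-derivative of order $m$ of $\widehat{\vp_{x''}}$ at $q=0$, up to a constant factor $i^{-m}$, namely $\partial_q^m \widehat{\vp_{x''}}(\th,q)\big|_{q=0}$. Because $\widehat{\vp_{x''}}(\th,q)=\hat f(q\th)$ as a function on $\bbr^{k+1}$ evaluated along the ray, the chain rule gives $\partial_q^m\big|_{q=0}\hat f(q\th) = \sum_{|\a|=m}\binom{m}{\a}\th^\a(\partial^\a \hat f)(0)$, which is exactly a homogeneous polynomial of degree $m$ in $\th$. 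Tracking the passive $x''$-dependence, the coefficients are $c_\a(x'')=\frac{i^{-m}}{\a!}\,m!\,(\partial_{x'}^\a \hat f)(0,\cdot)$ read as functions of $x''$; since $\hat f\in S(\rn)$, these partial-at-origin values are Schwartz in $x''$, giving $c_\a\in S(\bbr^{n-k-1})$ as required. The care needed here is to justify interchanging the $s$-integral with differentiation and to confirm that the ``value at $q=0$ along the ray $q\th$'' genuinely equals an honest derivative of the smooth function $\hat f$ on $\bbr^{k+1}$ rather than merely a one-sided directional derivative; this is precisely what the homogeneous-polynomial bookkeeping in the Lemma and Corollary \ref{iokl} is designed to supply, and it is the step I would write out most carefully. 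Continuity of $f\mapsto \tilde R_k f$ then follows by assembling the seminorm bounds $\|\tilde R_k f\|_m\le c_{m,N}\|f\|_N$ collected along the way, including the bounds on the moment-polynomial coefficients.
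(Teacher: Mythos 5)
Your proposal is correct and follows essentially the same route as the paper: both factor $\tilde R_k$ through the projection--slice theorem as (Fourier transform in $x'$) $\circ$ (restriction to rays $q\th$, handled by Corollary \ref{iokl} with $x''$ as a passive parameter) $\circ$ (inverse Fourier transform in $q$), and both obtain the moment condition as a homogeneous polynomial in $\th$ with Schwartz coefficients in $x''$ (the paper computes $\int_\bbr \vp\, s^m\, ds=\int_{\bbr^{k+1}}f(x',x'')(x'\cdot\th)^m\,dx'$ directly by Fubini, while you equivalently read it off from $\partial_q^m\hat f(q\th)\big|_{q=0}$). No gaps.
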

\begin{proof}  By (\ref{kkmm4539a1}) and (\ref{durtkkaz}), the function
\be \label{k4539a1az} \vp (\th, s; x'')=\intl_{\th^\perp \cap \bbr^{k+1}} f(s\th +u, x'')\, d_\th u= (Rf_{x''})(\th, s), \ee
is the usual hyperplane Radon transform in $\bbr^{k+1}$ of $f_{x''} (x')= f(x',x'')$. Hence,  (\ref{7wer34}) follows from the equalities
\[  \intl_{\bbr}  \vp (\th, s; x'')\, s^m\, ds = \intl_{\bbr}(Rf_{x''})(\th, s)\, s^m\, ds=
\intl_{\bbr^{k+1}} f(x',x'') (x' \cdot \theta)^k\, dx'.\]
  The evenness property (\ref{65679z90swu}) is a consequence of (\ref{k4539a1az}).
Furthermore, by the Projection-Slice Theorem,
\[ [ \vp (\th, \cdot; x'')]^\wedge (\eta)=[(Rf_{x''})(\th, \cdot)]^\wedge (\eta)= [f(\cdot,x'')]^\wedge (\eta \th).\]
Hence, $A: f \to \vp\!=\!\tilde R_k f$ is a composition of three mappings, specifically, $A\!=\!A_3 A_2 A_1$, where
\[ \begin{array} {llll}  A_1 : \;f(x) &\to & [f(\cdot,x'')]^\wedge (\xi')&\equiv g(\xi', x'');\\
A_2 : \; g(\xi', x'') &\to & g(\th \eta, x'')&\equiv  w(\th, \eta; x'');\\
 A_3 : \; w(\th, \eta; x'') &\to & [w(\th, \cdot; x'')]^\vee (s) &\equiv  \vp (\th, s; x'').\end{array} \]
The continuity of the operators
\[ A_1: S(\rn) \! \to \!S(\bbr^{k+1} \times \bbr^{n-k-1}), \qquad A_3:\; S_e (\tilde Z_{n,k}) \to S_e (\tilde Z_{n,k})\]
   is a consequence of the isomorphism property of the Fourier transform.
 The continuity of $A_2$ from   $S(\bbr^{k+1} \times \bbr^{n-k-1})$   to $S_e (\tilde Z_{n,k})$ follows from Corollary \ref{iokl} applied in the $\xi'$-variable.
  This gives the result.
\end{proof}

The next proposition is the most technical.
\begin{proposition} \label {mnubhrkk}  If $ \vp\in S_H (\tilde Z_{n,k})$, then the function
\be\label {pp12vdkkk}
\psi (x)\equiv \psi(x', x'')=\intl_\bbr  \vp (x'/|x'|, s; x'') \, e^{is|x'|}\, ds
\ee
belongs to $S(\rn)$ and the map $ \vp \to \psi$ is continuous.
\end{proposition}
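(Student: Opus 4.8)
The plan is to identify $\psi$ with a value of the partial Fourier transform of $\vp$ in the single variable $s$, read off smoothness and decay away from the singular set $\{x'=0\}$ for free, and isolate the one genuinely delicate point, namely smoothness across $\{x'=0\}$. First I would introduce
\[
\Phi(\th,\tau;x'')=\intl_\bbr \vp(\th,s;x'')\,e^{is\tau}\,ds,
\]
so that $\psi(x)=\Phi(x'/|x'|,|x'|;x'')$ for $x'\neq0$ by (\ref{pp12vdkkk}). Since the Fourier transform in the single variable $s$ is an automorphism of $S_e(\tilde Z_{n,k})$ (the $x''$-parametric version of Corollary \ref{iokl2}), $\Phi$ again lies in $S_e(\tilde Z_{n,k})$: it is smooth, even in $(\th,\tau)$, and rapidly decreasing in $(\tau,x'')$ together with all derivatives. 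Differentiating the moment condition (\ref{7wer34}) under the integral sign, I would record its Fourier-side form
\[
(\partial_\tau^m\Phi)(\th,0;x'')=i^mP_m(\th,x''),\qquad m\in\bbz_+,
\]
a function homogeneous of degree $m$ in $\th$ whose coefficients $c_\a(x'')$ lie in $S(\bbr^{n-k-1})$. Away from $\{x'=0\}$ the map $x'\mapsto(x'/|x'|,|x'|)$ is smooth, so smoothness of $\psi$ there, and---via the differentiation formulas (\ref{aqqc})--(\ref{aqqc1hr}) combined with the Schwartz decay of $\Phi$ in $(\tau,x'')$---the rapid decay of $\psi$ and all its derivatives on $\{|x'|\ge1\}$, would both be routine.

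The main obstacle is $C^\infty$-smoothness at $x'=0$, and this is where the moment condition does the essential work. For a fixed $N\in\bbz_+$ I would Taylor-expand $\Phi$ in $\tau$ about $\tau=0$ with integral remainder,
\[
\Phi(\th,\tau;x'')=\sum_{m=0}^{N}\frac{\tau^m}{m!}\,(\partial_\tau^m\Phi)(\th,0;x'')+\tau^{N+1}R_N(\th,\tau;x''),
\]
where $R_N$ is smooth and, for $\tau$ in a bounded set, rapidly decreasing in $x''$. Substituting $\th=x'/|x'|$, $\tau=|x'|$ and using homogeneity,
\[
|x'|^m(\partial_\tau^m\Phi)(x'/|x'|,0;x'')=i^m|x'|^mP_m(x'/|x'|,x'')=i^mP_m(x',x''),
\]
which is a genuine polynomial in $x'$ of degree $m$ with $S(\bbr^{n-k-1})$-coefficients. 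Hence I would arrive at the representation
\[
\psi(x',x'')=\sum_{m=0}^{N}\frac{i^m}{m!}\,P_m(x',x'')+|x'|^{N+1}R_N(x'/|x'|,|x'|;x''),
\]
in which the polynomial part is manifestly smooth, so that everything is reduced to controlling the last term near the origin.

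For the remainder I would apply the differentiation formulas (\ref{aqqc})--(\ref{aqqc1hr}) in $x'$ to $R_N(x'/|x'|,|x'|;x'')$; since the $\th$- and $\tau$-derivatives of $R_N$ are bounded on $S^k\times[0,1]$, each $x'$-derivative costs at most one power of $|x'|^{-1}$, giving $\partial_{x'}^\b[R_N(x'/|x'|,|x'|;x'')]=O(|x'|^{-|\b|})$ as $x'\to0$. Combined with Leibniz' rule and $\partial_{x'}^\gam|x'|^{N+1}=O(|x'|^{N+1-|\gam|})$, this would yield
\[
\partial_{x'}^\b\big[\,|x'|^{N+1}R_N(x'/|x'|,|x'|;x'')\,\big]=O(|x'|^{N+1-|\b|})\to0\quad(x'\to0)
\]
for all $|\b|\le N$. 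By the standard criterion---a function that is $C^N$ off the origin and whose partial derivatives up to order $N$ extend continuously to the origin (here by $0$) is of class $C^N$ there---the remainder, and hence $\psi$, is $C^N$ at $x'=0$; as $N$ is arbitrary, $\psi\in C^\infty(\rn)$.

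Finally I would assemble $\psi\in S(\rn)$ and the continuity of $\vp\mapsto\psi$. The Schwartz estimates on $\{|x'|\ge1\}$ are the decay estimates already noted; on $\{|x'|\le1\}$ I would use the representation above, whose polynomial part has coefficients $c_\a\in S(\bbr^{n-k-1})$ and whose remainder has $x'$-derivatives bounded there with rapid $x''$-decay inherited from $R_N$. Tracking constants throughout, each seminorm $||\psi||_m$ is dominated by finitely many of the seminorms $||\vp||_N$ in (\ref{knnzwe35}): the passage $\vp\mapsto\Phi$ is continuous, the polynomials $P_m$ and their coefficients are estimated through (\ref{7wer34}), and the remainders $R_N$ are estimated by $\tau$-derivatives of $\Phi$. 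This gives $||\psi||_m\le c_{m,N}\,||\vp||_N$ and finishes the argument.
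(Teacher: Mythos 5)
Your argument is correct and follows essentially the same route as the paper's: the paper Taylor-expands $e^{is|x'|}$ to order $q$ under the integral sign, which yields exactly your polynomial part (via the moment condition (\ref{7wer34})) and exactly your remainder term, since the integral-form Taylor remainder of $\Phi(\th,\cdot\,;x'')$ at $\tau=|x'|$ coincides with $\int_\bbr \vp(\th,s;x'')\,e_q(is|x'|)\,ds$, and it obtains the large-$|x'|$ decay by integration by parts in $s$, i.e. precisely the rapid decay of $\Phi$ in $\tau$ that you invoke. The only difference is packaging: you phrase the expansion on the Fourier side ($\Phi$ and its $\tau$-Taylor remainder, then the extension criterion for $C^N$ functions across $\{x'=0\}$), while the paper manipulates $e_q(is|x'|)$ directly via the bound $|e_q^{(j)}(z)|\le c\,|z|^{q-j}$ and estimates the seminorms explicitly.
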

\begin{proof} We have to show that for every $m\in \bbz_+$ there exist $M=M(m)\in \bbz_+$  and a constant $C_m>0$ independent of $\vp$ such that
$||\psi||_m \le C_m \, ||\vp||_M$. To this end, it suffices to prove the following inequalities:
\be\label {ptgtgbbkkk} \sup\limits_{|p| +|\gam| \le m}\,
\sup\limits_{|x'|<1}\, \sup\limits_{x''} \,(1\!+\!|x''|)^m \, |\partial_{x'}^p  \partial_{x''}^\gam \psi(x', x'')|\le C_m \, || \vp||_M,\ee
 \be\label {ptgtgbbkkk1} \sup\limits_{|p| +|\gam| \le m}\,
\sup\limits_{|x'|>1}\, \sup\limits_{x''} \,(1\!+\!|x'|\!+\!|x''|)^m  \,|\partial_{x'}^p  \partial_{x''}^\gam \psi(x', x'')|\le C_m \, ||\vp||_M.\ee

In the following the letters $c$ and $C$ with subscripts stand for  constants which are not necessarily the same in any two occurrences.

 STEP 1 (proof of (\ref{ptgtgbbkkk}). Fix any  $q\in \bbn$. By Taylor's formula,
 \be\label {obvjhr1kk}
e^z=\sum\limits_{\nu=0}^{q-1}\frac{z^\nu}{\nu !}+ e_q (z), \qquad e_q (z)=\sum\limits_{\nu=q}^{\infty}\frac{z^\nu}{\nu !}.
\ee
Putting $z=is|x'|$,  we have
\bea
\psi(x', x'')&=&\sum\limits_{\nu=0}^{q-1}  \frac{(i|x'|)^\nu }{\nu !}   \intl_\bbr  \vp (x'/|x'|, s; x'') \,s^\nu \, ds\nonumber\\
&+&
 \intl_\bbr  \vp (x'/|x'|, s; x'') \, e_q (is|x'|)\, ds.\nonumber\eea
By (\ref{7wer34}),
\[
(i|x'|)^\nu  \intl_\bbr  \vp (x'/|x'|, s; x'') \,s^\nu \, ds = (i|x'|)^\nu P_{\nu} (x'/|x'|, x'')\]
where $ P_{\nu} (\th, x'')\!=\!\sum_{|\a|=\nu} c_{\a} (x'') \,\th^\a$, $c_{\a} (x'')\!\in \!S(\bbr^{n-k-1})$, or, by the homogeneity,
 \be\label {ob567r1kk}  (i|x'|)^\nu  \intl_\bbr  \vp (x'/|x'|, s; x'') \,s^\nu \, ds = P_{\nu} (ix', x'').\ee
Hence, for $\gam \in \bbz^{n-k-1}_+$, we may write
 \be\label {opljhr1kk}
\partial^\gam_{x''}\psi (x', x'')=\sum\limits_{\nu=0}^{q-1}  \frac{P_{\nu,\gam} (ix', x'')}{\nu !} + \Psi_{q,\gam} (x', x''),\ee
\bea \label {opljhr1kkty1} P_{\nu,\gam} (ix', x'')&=&(i|x'|)^\nu  \intl_\bbr (\partial^\gam_{x''} \vp) (x'/|x'|, s; x'') \,s^\nu \, ds\\
\label {opljhr1kkty2}&=&\sum\limits_{|\a|=\nu} (\partial^\gam c_{\a}) (x'') (ix')^\a, \eea
   \be \label {jjyyz1kksa}\Psi_{q,\gam} (x', x'')=\intl_\bbr (\partial^\gam_{x''} \vp) (x'/|x'|, s; x'')\, e_q (is|x'|)\, ds.\ee
 Let us estimate the derivatives  $(\partial^p _{x''}\Psi_{q,\gam}) (x', x'')$, assuming  $0\le |p|<q$. For $|x'|>0$ we have
 \bea
&&\qquad\partial^p_{x'} [(\partial^\gam_{x''} \vp) (x'/|x'|, s; x'')\, e_q (is|x'|)]\nonumber\\
&&{}\nonumber\\
\label {opljhr3kk} &&\qquad=\sum\limits_{\a+\b=p} c_{\a,\b}\, \partial^\a_{x'}[
(\partial^\gam_{x''} \vp) (x'/|x'|, s; x'')] \, \partial^\b_{x'} [e_q (is|x'|)]. \qquad\qquad\eea
By (\ref{aqqc}),
 \[\partial^\a_{x'}[
(\partial^\gam_{x''} \vp) (x'/|x'|, s; x'')]\!= \!|x'|^{-|\a|}\!\sum\limits_{|\mu |=1}^{|\a|} h_{\a,\mu} (x'/|x'|)\, (\partial^\gam_{x''}\partial^\mu_{x'} \vp_0)(x'/|x'|,s; x'')\]
where $\vp_0(x',s; x'')=\vp(x'/|x'|,s; x'')$ and $ h_{\a,\mu}$  are homogeneous polynomials independent of $\vp$. Thus, by (\ref{iiuuyg}),
\bea \label {opiilhr3kk} &&|\partial^\a_{x'}[
(\partial^\gam_{x''} \vp) (x'/|x'|, s; x'')]|\\
&&\le c_p \,|x'|^{-|\a|}\sum\limits_{|\mu |=1}^{|\a|} \sup\limits_{\th} |(\partial^\gam_{x''}\partial^\mu_\th \vp)(\th, s; x'')|.\nonumber\eea
To estimate $\partial^\b_{x'} [e_q (is|x'|)]$, we consider the cases $\b\neq 0$ and $\b=0$ separately. If $\b\neq 0$, then, by
 (\ref{aqqc1hr}),
 \be\label {opljhr3bnkk}
|\partial^\b_{x'} [e_q (is|x'|)]|\le  \sum\limits_{j=1}^{|\b|} |x'|^{j-|\b|} \,|h_{\b,j} (x'/|x'|)|\, |s|^{j}\, |e_q^{(j)} (is|x'|)|\ee
 where $h_{\b,j}$  are homogeneous polynomials.
 Since $e_q^{(j)}(z)=e_{q-j}(z)$ for any $0\le j\le q$,  the function
\[
\frac{e_q^{(j)} (z)}{z^{q-j}}=\frac{e_{q-j} (z)}{z^{q-j}}=
\frac{1}{z^{q-j}}\left ( e^z- \sum\limits_{\nu=0}^{q-j-1}\frac{z^{\nu}}{\nu !}\right ), \qquad z\in \bbc,\]
is bounded (check the cases $|z|\le 1$ and $|z|>1$ separately).
Hence, the expression $(is|x'|)^{j-q}\, e_q^{(j)} (is|x'|)$  is bounded uniformly in $s$ and $x'$, and (\ref{opljhr3bnkk}) yields
 \[|\partial^\b_{x'} [e_q (is|x'|)]|\le c_{\b, q} \,\sum\limits_{j=1}^{|\b|} |x'|^{j-|\b|} \, |s|^{j}\, |sx'|^{q-j}.\]
  This gives
  \be\label {bbpljhr3bn} |\partial^\b_{x'} [e_q (is|x'|)]|\le c_{q} \,|x'|^{q-|\b|} (1\!+\!|s|)^{q}.\ee
The last estimate extends to   $\b=0$, but the proof in this case is easier. Combining (\ref {opiilhr3kk}), (\ref {bbpljhr3bn}) and
  (\ref{opljhr3kk}), and keeping in mind that $|p|<q$, we obtain
 \bea
&&|\partial^p_{x'} [\partial^\gam_{x''}\vp(x'/|x'|, s; x'')\, e_q (is|x'|)]|  \nonumber\\
&&\le c_q (1\!+\!|s|)^{q} \Big (\sum\limits_{|\mu |=1}^{q} \sup\limits_{\th} |(\partial^\mu_\th \partial^\gam_{x''}\vp)(\th,s; x'')|\Big ) \sum\limits_{\a+\b=p} |x'|^{q-|\b|-|\a|}\nonumber\\
\label {ppphr3kk} &&\le  \frac{\tilde c_q  }{(1\!+\!|s|)^2}\,   \,|x'|^{q-|p|} \, \sup\limits_{|\mu|\le q} \,\sup\limits_{\theta, s}  (1\!+\!|s|)^{q+2}  |(\partial^\mu_\th \partial^\gam_{x''}\vp)(\th,s; x'')|. \eea
Since $q, \gam$ and $|p|<q$ are arbitrary, the
 latter means that we can  differentiate under the  sign of integration in  (\ref{jjyyz1kksa}) infinitely many times. Moreover,  if we fix any  $m\in \bbz_+$ and any $q>m$,  then, by (\ref{ppphr3kk}), we obtain
  \bea
 && \!\!\!\!\sup\limits_{|p| +|\gam| \le m}\,
\sup\limits_{|x'|<1}\, \sup\limits_{x''} \,(1\!+\!|x''|)^m \, |\partial_{x'}^p  \Psi_{q,\gam}) (x', x'')|\nonumber\\
&&\!\!\!\!\le \sup\limits_{|p| +|\gam| \le m}\,
\sup\limits_{|x'|<1}\, \sup\limits_{x''} \,(1\!+\!|x''|)^m \!\! \intl_{\bbr} \! |\partial^p_{x'} [\partial^\gam_{x''}\vp(x'/|x'|, s; x'')\, e_q (is|x'|)]|\, ds \qquad \nonumber\\
&&\!\!\!\! \le   C_q \, \sup\limits_{|p| +|\gam| \le m}\,\sup\limits_{|x'|<1}\,|x'|^{q-|p|} \,\sup\limits_{x''} \,(1\!+\!|x''|)^m \nonumber\\
&& \!\!\!\!\times \sup\limits_{|\mu|\le q} \,\sup\limits_{\theta, s} \, (1\!+\!|s|)^{q+2}  |(\partial^\mu_\th \partial^\gam_{x''}\vp)(\th,s; x'')|\nonumber\\
&& \!\!\!\!\le  C_q \,\sup\limits_{|p| +|\gam| \le m}\,\sup\limits_{|x'|<1}\,|x'|^{q-|p|} \, \sup\limits_{|\mu|+|\gam|\le m+q+2} \nonumber\\
&& \!\!\!\!\times \sup\limits_{\theta, s, x''} \,(1\!+\!|s|\!+\!|x''|)^{m +q+2} |(\partial^\mu_\th \partial^\gam_{x''}\vp)(\th,s; x'')|\nonumber\\
&& \!\!\!\!\le  C_q \,\sup\limits_{|p| +|\gam| \le m}\,\sup\limits_{|x'|<1}\,|x'|^{q-|p|} \, ||\vp||_{m+q+2}.\nonumber\eea
Setting $q=m+1$, we get
\be\label {bb4444bn} \sup\limits_{|p| +|\gam| \le m}\,
\sup\limits_{|x'|<1}\, \sup\limits_{x''} \,(1\!+\!|x''|)^m \, |\partial_{x'}^p  \Psi_{m+1,\gam}) (x', x'')| \le  C_m  \, ||\vp||_{2m+3}.\ee

Let us estimate the derivatives of the first term in (\ref{opljhr1kk}). By (\ref{opljhr1kkty1}) and  (\ref{opljhr1kkty2}),
\bea
(\partial^p_{x'} P_{\nu,\gam}) (ix', x'')&=&  \partial^p_{x'} \Big[ (i|x'|)^\nu  \intl_\bbr (\partial^\gam_{x''}\vp) (x'/|x'|, s; x'') \,s^\nu \, ds\Big ]\nonumber\\
&=&\sum\limits_{|\a|=\nu} (\partial^\gam c_{\a}) (x'')\, \partial^p_{x'}[(ix')^\a], \quad \nu \le q-1. \nonumber\eea
Hence, if $|p|> \nu$, then  $(\partial^p_{x'} P_{\nu,\gam}) (ix', x'')=0$. Suppose $|p|\le \nu$. Then
\bea
 && \sup\limits_{|p| +|\gam| \le m}\,
\sup\limits_{|x'|<1}\, \sup\limits_{x''} \,(1\!+\!|x''|)^m \, |(\partial_{x'}^p   P_{\nu,\gam}) (ix', x'')|\nonumber\\
&&\le c_p\, \sup\limits_{|p| +|\gam| \le m}\,
\sup\limits_{|x'|<1}\, \sup\limits_{x''} \,(1\!+\!|x''|)^m \nonumber\\
&& \times \sum\limits_{\a+\b=p} |\partial_{x'}^\b  [(i|x'|)^\nu]| \,
\Big | \intl_\bbr \partial_{x'}^\a [\partial^\gam_{x''} \vp) (x'/|x'|, s; x'')] \,s^\nu \, ds\Big |. \nonumber\eea
By (\ref{aqqc1hr}) and (\ref{opiilhr3kk}), the last expression does not exceed the following:
\bea
&&c_{p,\nu}\, \sup\limits_{|p| +|\gam| \le m}\,
\sup\limits_{|x'|<1}\, \sum\limits_{\a+\b=p} |x'|^{\nu-|\b|-|\a|}\, \sup\limits_{x''} \,(1\!+\!|x''|)^m \nonumber\\
&&\times \intl_\bbr
\Big [\sum\limits_{|\mu |=1}^{|\a|} \sup\limits_{\th} |(\partial^\gam_{x''}\partial^\mu_\th \vp)(\th, s; x'')|\Big ]\,s^\nu \, ds \nonumber\\
&&\le
\tilde c_{p,\nu}\, \sup\limits_{|p| +|\gam| \le m}\,\sup\limits_{|x'|<1}\,|x'|^{\nu-|p|}\,\sup\limits_{s,x''} \,
(1\!+\!|x''|\!+\!|s|)^{m +\nu +2}\nonumber\\
&&\times
\sum\limits_{|\mu |=1}^{|\a|} \sup\limits_{\th} |(\partial^\gam_{x''}\partial^\mu_\th \vp)(\th, s; x'')| \nonumber\\
&&\le C_{p,\nu}\, \sup\limits_{|p| +|\gam| \le m}\,\sup\limits_{|x'|<1}\,|x'|^{\nu-|p|}\,\sup\limits_{|\mu|+|\gam|+j\le |p|+m+\nu}\nonumber\\
&&\times \sup\limits_{\th,s,x''} \,(1\!+\!|x''|\!+\!|s|)^{m +\nu +2+|p|}|\,(\partial^\gam_{x''}\partial^\mu_\th \partial^j_s\vp)(\th, s; x'')| \nonumber\\
&&\le C_{p,\nu}\,  \sup\limits_{|p| +|\gam| \le m}\,\sup\limits_{|x'|<1}\,|x'|^{\nu-|p|}\,||\vp||_{m +\nu +2+|p|}\nonumber\\
&&\le C_{p,\nu}\, ||\vp||_{2m +\nu +2}\le  C_{m,q}\, ||\vp||_{2m +q +1}=  C_m\, ||\vp||_{3m +2};\nonumber\eea
here we choose $q=m+1$, as in (\ref{bb4444bn}).

Combining the last estimate with (\ref{opljhr1kk}) and (\ref{bb4444bn}), we obtain
\[
\sup\limits_{|p| +|\gam| \le m}\,
\sup\limits_{|x'|<1}\, \sup\limits_{x''} \,(1\!+\!|x''|)^m \,|\partial_{x'}^p\partial^\gam_{x''}\psi (x', x'')|\le C_m \, (||\vp||_{2m+3}+||\vp||_{3m +2}).\] This gives the first required inequality   (\ref{ptgtgbbkkk}).

\vskip 0.3 truecm

STEP 2.   Let us prove (\ref{ptgtgbbkkk1}).  Fix any $m\in \bbz_+$.
Then integration by parts yields
\[ [\vp (\th, \cdot; x'')]^\wedge (\eta)=(-i\eta)^{-m}\intl_\bbr (\partial^m_s  \vp )(\th, s; x'') \,e^{is\eta}\, ds.\]
For arbitrary multi-indices $\gam \in \bbz^{n-k-1}_+$ and $p\in \bbz^{k+1}_+$  satisfying $|p| +|\gam| \le m$, we have
\[  (\partial_{x'}^p\partial^\gam_{x''}\psi) (x', x'')=\partial_{x'}^p [(-i|x'|)^{-m}\psi_{m, \gam} (x', x'')], \]
\[ \psi_{m, \gam} (x', x'')=\intl_\bbr (\partial^m_s \partial^\gam_{x''} \vp)(x'/|x'|,s; x'')\, e^{is|x'|}\, ds,\]
and therefore,
\be\label {opl000vili}
|(\partial^p_{x'}  \partial^\gam_{x''}\psi)(x', x'')|\le c_p\,\sum\limits_{u+v=p} |\partial_{x'}^u [|x'|^{-m}]|\, |(\partial_{x'}^v \psi_{m, \gam})(x', x'')|.
\ee
By (\ref{aqqc1hr}),
 \be \label {owercvr771}
  |\partial_{x'}^u [|x'|^{-m}]|\le c_{m,u} \, |x'|^{-m-|u|}.\ee
To estimate $|(\partial_{x'}^v \psi_{m, \gam})(x', x'')|$, as in STEP 1, we have
\bea
&&\partial^v_{x'} [(\partial^m_s  \partial^\gam_{x''} \vp)(x'/|x'|, s; x'')\, e^{is|x'|}]\nonumber\\
\label {opljhr3vili} &&=\sum\limits_{\a+\b=v} c_{\a,\b}\, \partial^\a_{x'} [(\partial^m_s \partial^\gam_{x''} \vp)(x'/|x'|, s; x'')] \, \partial^\b_{x'} [e^{is|x'|}],\qquad \eea
where
\bea \label {opiilhr3vrvr} &&|\partial^\a_{x'} [(\partial^m_s \partial^\gam_{x''} \vp)(x'/|x'|, s; x'')]|\\
&&\le c_v \,|x'|^{-|\a|}\sum\limits_{|\mu |=1}^{|\a|} \sup\limits_{\th} |(\partial^\mu_\th \partial^m_s \partial^\gam_{x''}\tilde \vp)(\th,s; x'')|;\nonumber\eea
cf.  (\ref{opiilhr3kk}). Furthermore, by (\ref{aqqc1hr}), for $\b\neq 0$ we have
\[ \partial^\b_{x'} [e^{is|x'|}]=\sum\limits_{j=1}^{|\b|} |x'|^{j-|\b|} \,h_{\b,j} (x'/|x'|)\, (is)^j\, e^{is|x'|},\]
where $h_{\b,j}$   are homogeneous polynomials. Hence,  since $|x'|>1$,
\be \label {owercvr}   |\partial^\b_{x'} [e^{is|x'|}]|\le c_\b \, \sum\limits_{j=1}^{|\b|} |x'|^{j-|\b|}  |s|^j\le \tilde c_\b \, (1\!+\!|s|)^{|\b|}.\ee
This estimate obviously holds if $\b=0$. Combining (\ref{opljhr3vili}), (\ref{opiilhr3vrvr}), and (\ref{owercvr}), for $|v|\le |p|$ we obtain
\bea
&&|\partial^v_{x'} [(\partial^m_s \partial^\gam_{x''}\vp)(x'/|x'|, s; x'')\, e^{is|x'|}]|\nonumber\\
&&\le c_v \, (1\!+\!|s|)^{|v|}\sum\limits_{|\mu |=1}^{|v|} \sup\limits_{\th} |(\partial^\mu_\th \partial^m_s \partial^\gam_{x''}\vp)(\th,s; x'')|\nonumber\\
 &&\le\frac{c_p  }{(1\!+\!|s|)^2}\,\sup\limits_{|\mu|+j+|\gam|\le 2m+2} \; \sup\limits_{\th,s}   (1\!+\!|s|\!+\!|x''|)^{2m+2}|(\partial^\mu_\th \partial^j_s \partial^\gam_{x''} \vp)(\th,s; x'')|.\nonumber\eea
 Hence,  we can  differentiate under the  sign of integration in  $\psi_{m, \gam}(x', x'')$
 and get
    \bea \label {owercvr77}
 && |(\partial_{x'}^v \psi_{m, \gam})(x', x'')|\\
 &&\le \tilde c_p \, \sup\limits_{|\mu|+j+|\gam|\le 2m+2}    \sup\limits_{\th,s}   (1\!+\!|s|\!+\!|x''|)^{2m+2}|(\partial^\mu_\th \partial^j_s \partial^\gam_{x''} \vp)(\th,s; x'')|.\nonumber\eea
  Thus, (\ref{opl000vili}), (\ref{owercvr771}), and  (\ref{owercvr77}) yield
\bea \label {owercvr778} && \sup\limits_{|p| +|\gam| \le m}\,
\sup\limits_{|x'|>1}\, \sup\limits_{x''} \,(1\!+\!|x'|\!+\!|x''|)^m  \,|\partial_{x'}^p  \partial_{x''}^\gam \psi(x', x'')|\\
&&\le   c_{m} \,\sup\limits_{|p| +|\gam| \le m}\,
\sup\limits_{|x'|>1}\,\sup\limits_{x''} \,(1\!+\!|x'|\!+\!|x''|)^m\, \sum\limits_{u+v=p}  |x'|^{-m-|u|}\nonumber\\
&& \times \sup\limits_{|\mu|+j+|\gam|\le 2m+2} \; \sup\limits_{\th,s} \,  (1\!+\!|s|\!+\!|x''|)^{2m+2}|(\partial^\mu_\th \partial^j_s \partial^\gam_{x''}\vp)(\th,s; x'')|.\nonumber\eea
 Since for $|x'|>1$,
 \[\frac{1+|x'|+|x''|}{|x'|}=1+\frac{1+|x''|}{|x'|}<2+|x''|,\]
 then the expression in (\ref{owercvr778}) does not exceed
 \[ C_m\,\sup\limits_{|\mu|+j+|\gam|\le 3m+2}\, \sup\limits_{\th,s, x''} (1\!+\!|s|\!+\!|x''|)^{3m+2}\,|(\partial^\mu_\th \partial^j_s \partial^\gam_{x''} \vp)(\th,s; x'')|,\]
which is $ C_m\, || \vp ||_{3m+2}$. This completes the proof of Proposition \ref{mnubhrkk}.
\end{proof}

\subsubsection{The end of the proof of Theorem \ref{657390sw}}

In view of Proposition \ref{657390sw1}, it remains  to prove that any $\vp \in S_H (\tilde Z_{n,k})$ is uniquely represented as $\vp=\tilde R_kf$ for some $f\in S(\rn)$ and the map $\vp \to f$ is continuous in the topology of the corresponding Schwartz spaces.
In the following, dealing with a function of $x=(x', x'')$ on $\rn=\bbr^{k+1}\times \bbr^{n-k-1}$, we denote by $F_1 [\cdot](y')$ and $F_2 [\cdot](y'')$ the Fourier transform of this function in the first and the second variable, respectively. The corresponding $n$-dimensional Fourier transform will be denoted by  $F_n [\cdot](y)$, $y=(y', y'')$.

If $\vp \in S_H (\tilde Z_{n,k})$, then the function
$\psi(y',x'')= [ \vp (y'/|y'|, \cdot ; x'')]^\wedge (|y'|)$ belongs to $S(\bbr^{k+1}\times \bbr^{n-k-1})=S(\bbr^n)$; see Proposition \ref{mnubhrkk}. We define a new function
\[ \psi_1(y)\equiv \psi_1(y',y'')= F_2 [\psi(y', \cdot)](y'').\]
Putting $y'=\eta \th$,   $\eta \in \bbr$, $\th \in S^k$, according to (\ref{pp12vdkkk})  and (\ref{65679z90swu}),  we have
\be \label {jj4356z2a3}  F_2^{-1} [ \psi_1(\eta \th, \cdot) ](x'')= \psi (\eta \th,x'') = [ \vp (\th, \cdot ; x'')]^\wedge (\eta).\ee

 Now, let $f\in S(\rn)$ be the inverse $n$-dimensional Fourier transform of $\psi_1$, i.e.,$f=F_n^{-1}\psi_1=  F_1^{-1}F_2^{-1} \psi_1$. Then  by (\ref{kkmm4539a1bu}) and the Projection-Slice Theorem,
\bea
[(\tilde R_k f)(\th, \cdot ; x'')]^\wedge (\eta)&=&[(R f_{x''})(\th, \cdot)]^\wedge (\eta)=[f(\cdot ; x'')]^\wedge (\eta\th)\nonumber\\
&=&[(F_1^{-1}F_2^{-1} \psi_1)(\cdot ; x'')]^\wedge (\eta\th)\nonumber\\
&=& [F_2^{-1} [\psi_1 (\eta\th, \cdot)] (x'') \qquad \mbox{\rm (use (\ref{jj4356z2a3}))}\nonumber\\
&=&  [\vp (\th, \cdot ; x'')]^\wedge (\eta).\nonumber\eea
It follows that  $\tilde R_k f=\vp$. Moreover,  the map $\vp \to f$ is continuous thanks to the continuity of the
  mappings
 $$  \vp  \longrightarrow   \psi \longrightarrow   \psi_1 \longrightarrow  F_n^{-1} \psi_1=f$$
 in the corresponding Schwartz spaces.


\end{document}